\newtheorem{thm}{Theorem}[section]
\newtheorem{pro}[thm]{Proposition}
\newtheorem{lem}[thm]{Lemma}
\newtheorem{cor}[thm]{Corollary}
\newtheorem{thm&def}[thm]{Theorem \& Definition}
\newtheorem{lem&def}[thm]{Lemma \& Definition}
\theoremstyle{definition}
\newtheorem{defi}[thm]{Definition}                   
\theoremstyle{remark}
\newtheorem{rmk}[thm]{Remark}
\newcommand{\ncm}{\newcommand}
\ncm{\decorenum}[1]{

\begin{enumerate}}
\ncm{\decorenumend}{
\end{enumerate}

}
\ncm{\decorenumprime}[1]{

\begin{enumerate}}
\ncm{\decorenumprimeend}{
\end{enumerate}

}
\ncm{\Cat}{\mathsf{Cat}}
\ncm{\CAT}{\mathsf{CAT}}
\ncm{\ob}{\operatorname{ob}}
\ncm{\Nat}{\operatorname{Nat}}
\ncm{\Set}{\mathsf{Set}}
\ncm{\Ab}{\mathsf{Ab}}
\ncm{\Add}{\mathsf{Add}}
\ncm{\Mnd}{\mathsf{Mnd}}
\ncm{\Mon}{\mathsf{Mon}}
\ncm{\Fun}{\mathsf{Fun}}
\ncm{\MonCat}{\mathsf{MonCat}}
\ncm{\MonFun}{\mathsf{MonFun}}
\ncm{\Elt}{\mathsf{Elt}\,}
\ncm{\Sub}{\mathsf{Sub}\,}
\ncm{\Flat}{\mathsf{Flat}}
\ncm{\add}{\text{\Large\textsl{a}}}
\ncm{\proj}{\mathsf{proj}}
\ncm{\Col}{\mathsf{Col}\,}
\ncm{\fgmod}[1]{\mathsf{mod}\text{-}#1}
\ncm{\UEE}{\mathsf{u\text{-}eff}}
\ncm{\EEE}{\mathsf{e\text{-}eff}}
\ncm{\HEE}{\mathsf{h\text{-}eff}}
\ncm{\EE}{\mathsf{eff}}
\ncm{\Split}{\mathsf{split}}
\ncm{\Cl}{\mathsf{Cl}}
\ncm{\ord}{\mathsf{ord}}
\ncm{\Mod}[1]{\mathbf{Mod}\text{-}#1}
\ncm{\Bimod}[2]{#1\text{-}\mathbf{Mod}\text{-}#2}
\ncm{\Comod}[1]{\mathbf{Comod}\text{-}#1}
\ncm{\Sh}{\mathbf{Sh}}
\ncm{\Gr}{\mathbf{Gr}}
\ncm{\A}{\mathcal{A}}
\ncm{\B}{\mathcal{B}}
\ncm{\C}{\mathcal{C}}
\ncm{\D}{\mathcal{D}}
\ncm{\E}{\mathcal{E}}
\ncm{\F}{\mathcal{F}}
\ncm{\G}{\mathcal{G}}
\ncm{\Ha}{\mathcal{H}}
\ncm{\I}{\mathcal{I}}
\ncm{\J}{\mathcal{J}}
\ncm{\K}{\mathcal{K}}
\ncm{\Ll}{\mathcal{L}}
\ncm{\M}{\mathcal{M}}
\ncm{\N}{\mathcal{N}}
\ncm{\Ou}{\mathcal{O}}
\ncm{\Pee}{\mathcal{P}}
\ncm{\R}{\mathcal{R}}
\ncm{\X}{\mathcal{X}}
\ncm{\V}{\mathcal{V}}
\ncm{\U}{\mathcal{U}}
\ncm{\T}{\mathcal{T}}
\ncm{\Teven}{\mathcal{T}_\mathsf{e}}
\ncm{\Tcan}{\mathcal{T}_\mathsf{can}}
\ncm{\dom}{\operatorname{dom}}
\ncm{\cod}{\operatorname{cod}}
\ncm{\End}{\operatorname{End}}
\ncm{\Aut}{\operatorname{Aut}}
\ncm{\Hom}{\operatorname{Hom}}
\ncm{\kernel}{\operatorname{ker}}
\ncm{\Ker}{\operatorname{Ker}}
\ncm{\coker}{\operatorname{coker}}
\ncm{\Coker}{\operatorname{Coker}}
\ncm{\im}{\operatorname{im}}
\ncm{\Img}{\operatorname{Im}}
\ncm{\coim}{\operatorname{coim}}
\ncm{\id}{\operatorname{id}}
\ncm{\Center}{\operatorname{Center}}
\ncm{\colim}{\operatorname{colim}}
\ncm{\Colim}[1]{\underset{#1}{\operatorname{colim}}}
\ncm{\Lan}{\operatorname{Lan}}
\ncm{\Cone}{\operatorname{Cone}}
\ncm{\ev}{\operatorname{ev}}
\ncm{\coev}{\operatorname{coev}}
\ncm{\cf}{\operatorname{cf}}
\ncm{\hgt}{\operatorname{ht}}
\ncm{\ci}{\,\circ\,}
\ncm{\smp}{\ast}
\ncm{\smpc}{\circledast}
\ncm{\bu}{\bullet}
\ncm{\bo}{\,\Box\,}
\ncm{\ot}{\otimes}
\ncm{\oV}{\odot}
\ncm{\hot}{\,\bar{\ot}\,}
\ncm{\hotobj}{\,{\scriptstyle\bullet}\,}
\ncm{\votobj}{\circledcirc}               
\ncm{\vot}{\,{\ot}\,}
\ncm{\x}{\times}
\ncm{\subsetnoteq}{\underset{\neq}{\subset}}
\ncm{\ex}[1]{\underset{\scriptstyle #1}{\times}}
\ncm{\am}[1]{\underset{\scriptscriptstyle #1}{\ot}}
\ncm{\amo}[1]{\underset{\scriptstyle #1}{\ot}}
\ncm{\mash}{\Pisymbol{psy}{35}}
\ncm{\mashed}[1]{\underset{\scriptscriptstyle #1}{\Pisymbol{psy}{35}}}
\ncm{\cross}[1]{\underset{\scriptstyle #1}{\rtimes}}
\ncm{\rarr}[1]{\stackrel{#1}{\longrightarrow}}
\ncm{\larr}[1]{\stackrel{#1}{\longleftarrow}}
\ncm{\mapsot}{\leftarrow\!\!\!\raisebox{1pt}{$\scriptscriptstyle |$}}
\ncm{\oR}{\am{R}}
\ncm{\oE}{\am{E}}
\ncm{\oA}{\am{A}}
\ncm{\oS}{\am{S}}
\ncm{\oT}{\am{T}}
\ncm{\cop}{\Delta}
\ncm{\nulT}{^{(0)}}
\ncm{\oneT}{^{(1)}}
\ncm{\twoT}{^{(2)}}
\ncm{\threeT}{^{(3)}}
\ncm{\nulB}{_{(0)}}
\ncm{\oneB}{_{(1)}}
\ncm{\twoB}{_{(2)}}
\ncm{\threeB}{_{(3)}}
\ncm{\eps}{\varepsilon}
\ncm{\bra}{\langle}
\ncm{\ket}{\rangle}
\ncm{\dirim}[1]{{#1}_*}
\ncm{\invim}[1]{{#1}^*}
\ncm{\asso}{\mathbf{a}}
\ncm{\luni}{\mathbf{l}}
\ncm{\runi}{\mathbf{r}}
\ncm{\iso}{\stackrel{\sim}{\rightarrow}}
\ncm{\iiso}{\rarr{\sim}}
\ncm{\ract}{\,\triangleleft\,}
\ncm{\lact}{\triangleright}
\ncm{\under}{\mbox{\,\rm\_}\,}
\ncm{\adj}{\dashv}
\ncm{\adjoint}{\dashv}
\ncm{\into}{\hookrightarrow}
\ncm{\can}{\mathrm{can}}
\ncm{\fgp}{\mathrm{fgp}}
\ncm{\op}{\mathrm{op}}
\ncm{\reg}{\mathrm{reg}}
\ncm{\coop}{\mathrm{coop}}
\ncm{\rev}{\mathrm{rev}}
\ncm{\sst}{\scriptstyle}
\ncm{\ssst}{\scriptscriptstyle}
\ncm{\eqby}[1]{\stackrel{(\ref{#1})}{=}}
\ncm{\lef}{{\ssst <}}
\ncm{\righ}{{\ssst >}}
\ncm{\NN}{\mathbb{N}}
\ncm{\ZZ}{\mathbb{Z}}
\ncm{\QQ}{\mathbb{Q}}
\ncm{\GG}{\mathbf{G}}
\ncm{\FF}{\mathbb{F}}
\ncm{\TT}{\mathsf{T}}
\ncm{\Q}{\mathsf{Q}}
\ncm{\pisharp}{\Pisymbol{psy}{35}}
\ncm{\parallelpair}{
\parbox{43pt}{
\begin{picture}(43,8)
\put(3,6){\vector(1,0){37}}
\put(3,2){\vector(1,0){37}}
\end{picture}
}}
\ncm{\pair}[2]{\overset{#1}{\underset{#2}{\parallelpair}}}
\ncm{\longparallelpair}{
\parbox{63pt}{
\begin{picture}(63,8)
\put(3,6){\vector(1,0){57}}
\put(3,2){\vector(1,0){57}}
\end{picture}
}}
\ncm{\longpair}[2]{\overset{#1}{\underset{#2}{\longparallelpair}}}
\ncm{\longerparallelpair}{
\parbox{83pt}{
\begin{picture}(83,8)
\put(3,6){\vector(1,0){77}}
\put(3,2){\vector(1,0){77}}
\end{picture}
}}
\ncm{\longerpair}[2]{\overset{#1}{\underset{#2}{\longerparallelpair}}}
\ncm{\longrightarrowtail}{
\parbox{40pt}{
\begin{picture}(40,8)
\put(6,4){\line(-1,1){1.5}}
\put(6,4){\line(-1,-1){1.5}}
\put(6,4){\vector(1,0){31}}
\end{picture}
}}
\ncm{\longerrightarrowtail}{
\parbox{60pt}{
\begin{picture}(60,8)
\put(6,4){\line(-1,1){1.5}}
\put(6,4){\line(-1,-1){1.5}}
\put(6,4){\vector(1,0){51}}
\end{picture}
}}
\ncm{\longrarr}[1]{
\overset{#1}{
\parbox{40pt}{
\begin{picture}(40,8)
\put(3,4){\vector(1,0){34}}
\end{picture}
}}}
\ncm{\longerrarr}[1]{
\overset{#1}{
\parbox{70pt}{
\begin{picture}(70,8)
\put(3,4){\vector(1,0){64}}
\end{picture}
}}}
\ncm{\longerlarr}[1]{
\overset{#1}{
\parbox{70pt}{
\begin{picture}(70,8)
\put(67,4){\vector(-1,0){64}}
\end{picture}
}}}
\ncm{\longlarr}[1]{\overset{#1}{\longleftarrow}}
\ncm{\antiparallelpair}{
\parbox{23pt}{
\begin{picture}(23,4)
\put(3,3){\vector(1,0){17}}
\put(20,1){\vector(-1,0){17}}
\end{picture}
}}
\ncm{\invantiparallelpair}{
\parbox{23pt}{
\begin{picture}(23,4)
\put(3,1){\vector(1,0){17}}
\put(20,3){\vector(-1,0){17}}
\end{picture}
}}
\ncm{\dualpair}[2]{\overset{#1}{\underset{#2}{\antiparallelpair}}}
\ncm{\invdualpair}[2]{\overset{#1}{\underset{#2}{\invantiparallelpair}}}
\ncm{\coantiparallelpair}{
\parbox{23pt}{
\begin{picture}(23,4)
\put(3,1){\vector(1,0){17}}
\put(20,4){\vector(-1,0){17}}
\end{picture}
}}
\ncm{\codualpair}[2]{\overset{#1}{\underset{#2}{\coantiparallelpair}}}
\ncm{\binarydirectsum}[7]{#1\codualpair{#2}{#3}#4\dualpair{#5}{#6}#7}
\ncm{\equalizer}[1]{\overset{#1}{\longrightarrowtail}}
\ncm{\epi}[1]{\overset{#1}{\twoheadrightarrow}}
\ncm{\coequalizer}[1]{
\overset{#1}{
\parbox{40pt}{
\begin{picture}(40,8)
\put(2,4){\vector(1,0){32}} \put(37,4){\vector(1,0){0}}
\end{picture}
}}}
\ncm{\longcoequalizer}[1]{
\overset{#1}{
\parbox{60pt}{
\begin{picture}(60,8)
\put(2,4){\vector(1,0){52}} \put(57,4){\vector(1,0){0}}
\end{picture}
}}}
\ncm{\mono}[1]{\overset{#1}{\rightarrowtail}}
\ncm{\coequalizerfactorization}[9]{
\parbox[r]{115pt}{
\begin{picture}(115,70)(0,-5)
\put(0,48){$#1\longpair{#2}{#3}#4$} 
\end{picture}
}
\parbox[l]{80pt}{
\begin{picture}(80,70)(0,-5)
\put(2,51){\vector(1,0){70}} \put(75,51){\vector(1,0){0}}
\put(23,56){$\sst #5$}
\put(80,48){$#6$}
\put(2,47){\vector(2,-1){73}}
\put(48,30){$\sst #7$}
\dashline[+30]{3}(100,42)(100,12) \put(100,12){\vector(0,-1){0}}
\put(105,30){$\sst #8$}
\put(95,0){$#9$}
\end{picture}
}}
\begin{document}

\title{Skew monoidal monoids}
\author{K. Szlach\'anyi}
\date{}
\address{Wigner Research Centre for Physics, Budapest}
\email{szlachanyi.kornel@wigner.mta.hu}
\thanks{Supported by the Hungarian Scientific Research Fund, OTKA 108384}

\begin{abstract}
Skew monoidal categories are monoidal categories with non-invertible `coherence' morphisms. As shown in a previous paper
bialgebroids over a ring $R$ can be characterized as the closed skew monoidal structures on the category $\Mod R$ 
in which the unit object is $R_R$. 
This offers a new approach to bialgebroids and Hopf algebroids. Little is known about skew monoidal structures on general categories. In the present paper we study the one-object case: skew monoidal monoids (SMM). We show that they possess a dual pair of bialgebroids describing the symmetries of the (co)module categories of the SMM. These bialgebroids are submonoids of their own base and are rank 1 free over the base on the source side. We give various equivalent definitions of SMM, study the structure of their (co)module categories and discuss the possible closed and Hopf structures on a SMM. 

\end{abstract}
\maketitle

\section{Three definitions of skew monoidal monoids}

Skew monoidal monoids are skew monoidal categories, as defined in \cite{SMC}, in which the underlying category has only one object, so it is a monoid.

If $A$ denotes the monoid and 1 denotes its unit element then a skew-monoidal structure on $A$ amounts to have a monoid morphism $A\x A\to A$, $\bra a, b\ket\mapsto a\smp b$, the skew monoidal product,
\begin{align}
\label{smp 1}
(ab)\smp(cd)&=(a\smp c)(b\smp d)\\
\label{smp 2}
1\smp 1&=1
\end{align}
and elements $\gamma$, $\eta$, $\eps$ of $A$, the "coherence" morphisms,
satisfying the naturality conditions
\begin{align}
\label{nat gam}
\gamma (a\smp (b\smp c))&=((a\smp b)\smp c)\gamma\\
\label{nat eta}
\eta a&=(1\smp a)\eta\\
\label{nat eps}
a\eps&=\eps(a\smp 1)
\end{align}
and the skew monoidality axioms
\begin{align}
(\gamma\smp 1)\gamma(1\smp\gamma)&=\gamma^2\\
\gamma\eta&=\eta\smp 1\\
\eps\gamma&=1\smp \eps\\
(\eps\smp 1)\gamma(1\smp\eta)&=1\\
\eps\eta&=1\,.
\end{align}

If the skew monoidal product were associative, so $\gamma$ were the identity, and it had a unit then the Eckman-Hilton argument
would force $a\smp b=ab$ and $A$ would be commutative. Since $\gamma$ is not assumed even to be invertible, skew monoidal 
monoids have a good chance to be non-trivial.

\begin{lem} \label{lem: gam axioms}
A skew monoidal monoid $\A=\bra A,\smp,\gamma,\eta,\eps\ket$ is the same as the data consisting of
\begin{enumerate}
\item a monoid $A$, 
\item two monoid endomorphisms $T:A\to A$ and $Q:A\to A$ such that
\begin{equation} \label{smm-g 1}
T(a)Q(b)=Q(b)T(a)\qquad a,b\in A
\end{equation}
\item an element $\gamma\in A$ satisfying the intertwiner relations
\begin{align}
\label{smm-g 2}
\gamma T^2(a)&=T(a)\gamma\\
\label{smm-g 3}
\gamma TQ(a)&=QT(a)\gamma\\
\label{smm-g 4}
\gamma Q(a)&=Q^2(a)\gamma
\end{align}
for all $a\in A$
\item and elements $\eta,\eps\in A$ satisfying
\begin{align}
\label{smm-g 5}
\eta a&=T(a)\eta\\
\label{smm-g 6}
a\eps&=\eps Q(a)
\end{align}
for all $a\in A$.
\end{enumerate}
These data are required, furthermore, to obey the skew monoidality relations
\begin{align}
\label{smm-g 7}
Q(\gamma)\gamma T(\gamma)&=\gamma^2\\
\label{smm-g 8}
\gamma\eta&=Q(\eta)\\
\label{smm-g 9}
\eps\gamma&=T(\eps)\\
\label{smm-g 10}
Q(\eps)\gamma T(\eta)&=1\\
\label{smm-g 11}
\eps\eta&=1\,.
\end{align}
\end{lem}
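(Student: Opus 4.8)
The plan is to set up an explicit dictionary between the two descriptions and then translate the eight axioms one at a time. The key move is to \emph{define} the endomorphisms directly from the product by $T(a)=1\smp a$ and $Q(a)=a\smp 1$. From the interchange law \eqref{smp 1} and the unit law \eqref{smp 2} one reads off at once that $T$ and $Q$ are monoid endomorphisms: e.g. $T(ab)=(1\cdot 1)\smp(a\cdot b)=(1\smp a)(1\smp b)=T(a)T(b)$ and $T(1)=1\smp 1=1$, and symmetrically for $Q$. Applying \eqref{smp 1} in its two guises gives the two identities
\begin{align*}
(a\smp 1)(1\smp b)&=(a\cdot 1)\smp(1\cdot b)=a\smp b, \\
(1\smp a)(b\smp 1)&=(1\cdot b)\smp(a\cdot 1)=b\smp a=(b\smp 1)(1\smp a),
\end{align*}
so that the product factors as $a\smp b=Q(a)T(b)$ and the images of $T$ and $Q$ commute, which is precisely \eqref{smm-g 1}. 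Conversely, starting from commuting endomorphisms $T,Q$ and putting $a\smp b:=Q(a)T(b)$, the unit law is immediate and the interchange law holds exactly because $Q(b)T(c)=T(c)Q(b)$; moreover $1\smp a=T(a)$ and $a\smp 1=Q(a)$ recover $T,Q$, while $Q(a)T(b)=(a\smp 1)(1\smp b)=a\smp b$ recovers $\smp$, so the two assignments are mutually inverse.

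It then remains to match the coherence axioms under this dictionary. The naturality conditions for $\eta$ and $\eps$, \eqref{nat eta} and \eqref{nat eps}, literally become \eqref{smm-g 5} and \eqref{smm-g 6} once $1\smp a=T(a)$ and $a\smp 1=Q(a)$ are substituted. For $\gamma$ I would expand both sides of \eqref{nat gam} through the factorization: since $a\smp(b\smp c)=Q(a)\,TQ(b)\,T^2(c)$ and $(a\smp b)\smp c=Q^2(a)\,QT(b)\,T(c)$, naturality reads
\[
\gamma\,Q(a)\,TQ(b)\,T^2(c)=Q^2(a)\,QT(b)\,T(c)\,\gamma \qquad(a,b,c\in A).
\]
Specialising any two of the three arguments to $1$ extracts the three intertwiner relations \eqref{smm-g 2}, \eqref{smm-g 3}, \eqref{smm-g 4}; conversely, moving $\gamma$ rightwards through the factors $Q(a)$, $TQ(b)$, $T^2(c)$ by these three relations in turn reproduces the displayed identity. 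This equivalence \emph{in both directions} is the one step that needs more than a one-line substitution, and is where I expect the only real bookkeeping to lie — one must keep straight which composite ($Q^2$, $QT$, $TQ$, $T^2$, $T$) attaches to which argument when the nested products are expanded.

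Finally I would dispatch the five skew-monoidality relations by the same substitutions $\gamma\smp 1=Q(\gamma)$, $1\smp\gamma=T(\gamma)$, $\eta\smp 1=Q(\eta)$, $1\smp\eps=T(\eps)$, $\eps\smp 1=Q(\eps)$ and $1\smp\eta=T(\eta)$: the associativity relation $(\gamma\smp 1)\gamma(1\smp\gamma)=\gamma^2$ becomes \eqref{smm-g 7}, the two triangle-type relations become \eqref{smm-g 8} and \eqref{smm-g 9}, the mixed relation $(\eps\smp 1)\gamma(1\smp\eta)=1$ becomes \eqref{smm-g 10}, and $\eps\eta=1$ is unchanged as \eqref{smm-g 11}. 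Since every axiom has been shown equivalent to its counterpart and the passage $\smp\leftrightarrow(T,Q)$ is a bijection of structures, the two descriptions coincide. The main conceptual point is the factorization $a\smp b=Q(a)T(b)$; once it is in hand the rest of the proof is entirely mechanical.
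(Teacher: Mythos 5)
Your proposal is correct and follows essentially the same route as the paper: define $T=1\smp\under$, $Q=\under\smp 1$, derive the factorization $a\smp b=Q(a)T(b)$ and the commutation \eqref{smm-g 1} from the interchange law, extract \eqref{smm-g 2}--\eqref{smm-g 4} by specialising two of the three arguments in the naturality of $\gamma$ to $1$, and translate the remaining axioms by direct substitution. You merely spell out the converse recovery of full naturality from the three intertwiner relations, which the paper leaves as an easy verification.
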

\begin{proof}
Assume $\bra A,\smp,\gamma,\eta,\eps\ket$ is a SMM. Define $T:=1\smp\under$ and $Q:=\under\smp 1$. Then $T$ and $Q$
are monoid endomorphisms by (\ref{smp 1}), (\ref{smp 2}) and satisfies (\ref{smm-g 1}) by the interchange law, the latter being
a consequence of (\ref{smp 1}), too.
Replacing two of the $a,b,c$ by 1 in the expression $a\smp (b \smp c)$ we obtain $T^2(c)$, $TQ(b)$ and $Q(a)$, respectively.
In a similar way $(a\smp b)\smp c$ leads to $T(c)$, $QT(b)$ and $Q^2(a)$. Therefore the naturality condition (\ref{nat gam}) alone
implies (\ref{smm-g 2}), (\ref{smm-g 3}) and (\ref{smm-g 4}). Equations (\ref{smm-g 5}) and (\ref{smm-g 6}) are obvious transcriptions
of (\ref{nat eta}) and (\ref{nat eps}). The same can be said about the remaining 5 skew-monoidality relations.

Vice versa, assume that the data $\bra A, T,Q,\gamma,\eta,\eps\ket$ satis fies the 11 axioms (\ref{smm-g 1}-\ref{smm-g 11}).
Then a skew monoidal product on the one-object category $A$ can be defined by $a\smp b:=Q(a)T(b)$.  The verification of the skew
monoidal monoid axioms is now easy.
\end{proof}

Let $\Mon$ denote the 2-category of monoids as a monoidal sub-2-category of the cartesian monoidal 2-category $\Cat$.
So the objects of $\Mon$ are the monoids, the 1-cells are the monoid homomorphisms $f:A\to B$ and the 2-cells $b:f\to g:A\to B$ are the elements $b\in B$ satisfying the intertwiner property $bf(a)=g(a)b$ for all $a\in A$.

As in every skew monoidal category (see \cite[Lemma 2.6]{SMC}) we can introduce a monad $\bra T,\mu,\eta\ket$ and a comonad $\bra Q,\delta,\eps\ket$ on the object $A$ of $\Mon$ where multiplication $\mu$ and the comultiplication $\delta$ are defined by
\begin{align}
\label{def of mu}
\mu&:=Q(\eps)\gamma\\
\label{def of delta}
\delta&:=\gamma T(\eta)\,.
\end{align}
It has also been shown in that Lemma that $\gamma:TQ\to QT$ is a mixed distributive law. In our one-object category the defining
relations of the distributive law take the form
\begin{align}
Q(\mu)\gamma T(\gamma)&=\gamma\mu\\
Q(\gamma)\gamma T(\delta)&=\delta\gamma\\
\gamma\eta&=Q(\eta)\\
\eps\gamma&=T(\eps)\,.
\end{align}
The last two of these coincide with the triangle relations (\ref{smm-g 8}-\ref{smm-g 9}), the first two are simple consequences of the pentagon relation (\ref{smm-g 7}).

The SMM axioms can be reformulated using $\mu$ and $\delta$ instead of $\gamma$. Although the number of axioms gets
larger, it reveals SMM-s as something like a bimonoid. 
\begin{pro} \label{pro: SMM mu-del axioms}
A skew monoidal monoid is the same as the data consisting of
\begin{itemize}
\item a monoid $A$,
\item two monoid endomorphisms $T:A\to A$ and $Q:A\to A$
\item and elements $\mu$, $\eta$, $\delta$, $\eps$ of $A$
\end{itemize}
subject to the following axioms.

Intertwiner relations: For all $a\in A$
\begin{align}
\label{SMM1}
\mu\, T^2(a)&=T(a)\mu\\
\label{SMM2}
\eta \,a&=T(a)\eta\\
\delta\, Q(a)&=Q^2(a)\delta\\
a\,\eps&=\eps\, Q(a)\\
\label{SMM5}
\mu\,\delta\, TQ(a)&=QT(a)\mu\,\delta
\end{align}

Commutation relations: For all $a,b\in A$
\begin{align}
\label{SMM6}
T(a)Q(b)&=Q(b)T(a)\\
\label{SMM7}
\mu Q(a)&=Q(a)\mu\\
\label{SMM8}
\delta T(a)&=T(a)\delta
\end{align}

Bimonoid(-like) relations:
\begin{align}
\label{SMM9}
\mu T(\mu)&=\mu^2\\
\label{SMM10}
\mu\,\eta&=1\\
\label{SMM11}
\mu\, T(\eta)&=1\\
\label{SMM12}
Q(\delta)\delta&=\delta^2\\
\label{SMM13}
\eps\delta&=1\\
\label{SMM14}
Q(\eps)\delta&=1\\
\label{SMM15}
Q(\mu)\mu\,\delta T(\delta)&=\delta\,\mu\\
\label{SMM16}
\eps\,\mu&=\eps\, T(\eps)\\
\label{SMM17}
\delta\,\eta&=Q(\eta)\eta\\
\label{SMM18}
\eps\eta&=1
\end{align}
\end{pro}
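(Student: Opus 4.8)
The plan is to prove that the data of the Proposition are equivalent to the $\gamma$-description of Lemma~\ref{lem: gam axioms}; combined with that Lemma this yields the claim. One direction of the translation is already fixed by (\ref{def of mu}--\ref{def of delta}): from $\bra A,T,Q,\gamma,\eta,\eps\ket$ one keeps $A,T,Q,\eta,\eps$ and sets $\mu:=Q(\eps)\gamma$, $\delta:=\gamma T(\eta)$. For the inverse translation I would guess, and then justify, the single formula
\begin{equation*}
\gamma:=\mu\delta .
\end{equation*}
The proof then has three tasks: (a) derive the eighteen relations (\ref{SMM1}--\ref{SMM18}) from the eleven relations (\ref{smm-g 1}--\ref{smm-g 11}); (b) conversely derive (\ref{smm-g 1}--\ref{smm-g 11}) from (\ref{SMM1}--\ref{SMM18}) once $\gamma=\mu\delta$; and (c) check the two assignments are mutually inverse.

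For task (a) the intertwiner and commutation relations are routine. Relation (\ref{SMM2}) and the $\eps$-intertwiner are copied from (\ref{smm-g 5}--\ref{smm-g 6}), and (\ref{SMM6}) is (\ref{smm-g 1}); the remaining ones, (\ref{SMM1}), (\ref{SMM5}), (\ref{SMM7}), (\ref{SMM8}), follow by inserting the definitions of $\mu,\delta$ and pushing $\gamma$ through $T$ and $Q$ by (\ref{smm-g 2}--\ref{smm-g 4}) and through $Q(\eps),T(\eta)$ by (\ref{smm-g 1}). The unit relations (\ref{SMM10}), (\ref{SMM11}), (\ref{SMM13}), (\ref{SMM14}), (\ref{SMM18}) reduce to the triangles (\ref{smm-g 10}--\ref{smm-g 11}) together with the elementary identities $\eps Q(\eps)=\eps^2=\eps T(\eps)$ coming from (\ref{smm-g 6}) and (\ref{smm-g 9}); the same computation produces (\ref{SMM16}) and, dually, (\ref{SMM17}). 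Finally the three nonlinear relations (\ref{SMM9}), (\ref{SMM12}), (\ref{SMM15}) are exactly where the pentagon (\ref{smm-g 7}) is needed: expanding $\mu^2$, $\delta^2$ and $\delta\mu$, moving $\gamma$ past $T$ and $Q$, and applying the pentagon to the resulting $Q(\gamma)\gamma T(\gamma)$ gives them; in particular one finds $\delta\mu=Q(\mu)\gamma T(\delta)=Q(\mu)\mu\delta T(\delta)$, which is (\ref{SMM15}).

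Task (b) is the heart of the matter. With $\gamma:=\mu\delta$ one first recovers the defining formulas as consistency conditions: $Q(\eps)\gamma=\mu$ follows from the commutation (\ref{SMM7}) and $Q(\eps)\delta=1$ of (\ref{SMM14}), and $\gamma T(\eta)=\delta$ from (\ref{SMM8}) and $\mu T(\eta)=1$ of (\ref{SMM11}). The intertwiner relations (\ref{smm-g 2}--\ref{smm-g 6}) for $\gamma=\mu\delta$ then follow by combining the separate intertwiner relations for $\mu$ and $\delta$, and (\ref{smm-g 3}) is literally (\ref{SMM5}); the triangles (\ref{smm-g 8}--\ref{smm-g 11}) unwind into (\ref{SMM10}--\ref{SMM18}) using the same consistency identities. \textbf{The main obstacle is the pentagon} (\ref{smm-g 7}), $Q(\gamma)\gamma T(\gamma)=\gamma^2$. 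Writing both sides through $\mu,\delta$, the left-hand side collapses by (\ref{SMM7}) and $Q(\delta)\delta=\delta^2$ of (\ref{SMM12}),
\begin{equation*}
Q(\gamma)\gamma T(\gamma)=Q(\mu)Q(\delta)\,\mu\delta\,T(\mu)T(\delta)=Q(\mu)\mu\,\delta^2\,T(\mu)T(\delta),
\end{equation*}
while the right-hand side is reshaped by substituting the mixed relation $\delta\mu=Q(\mu)\mu\delta T(\delta)$ of (\ref{SMM15}) and then using (\ref{SMM7}), (\ref{SMM8}), (\ref{SMM9}),
\begin{equation*}
\gamma^2=\mu\,(\delta\mu)\,\delta=\mu\,Q(\mu)\mu\delta T(\delta)\,\delta=Q(\mu)\mu\,\delta^2\,T(\mu)T(\delta).
\end{equation*}
The two meet, so (\ref{SMM15}) is precisely the compatibility forcing the pentagon, and tracking this cancellation carefully is the one genuinely delicate step.

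For task (c), going from $\gamma$ to $(\mu,\delta)$ and back amounts to the identity
\begin{equation*}
\mu\delta=Q(\eps)\gamma^2T(\eta)=QT(\eps)\,\gamma\,TQ(\eta)=\gamma\,TQ(\eps)\,TQ(\eta)=\gamma\,TQ(\eps\eta)=\gamma,
\end{equation*}
which uses the pentagon (\ref{smm-g 7}), the triangles (\ref{smm-g 8}--\ref{smm-g 9}), the intertwiner (\ref{smm-g 3}) and $\eps\eta=1$ of (\ref{smm-g 11}); going from $(\mu,\delta)$ to $\gamma$ and back reproduces $\mu,\delta$ by the two consistency identities already noted. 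This closes the equivalence and, with Lemma~\ref{lem: gam axioms}, proves the Proposition.
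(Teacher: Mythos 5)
Your proposal is correct and follows exactly the route the paper takes (the paper's own proof merely states that setting $\gamma:=\mu\delta$ in one direction and $\mu:=Q(\eps)\gamma$, $\delta:=\gamma T(\eta)$ in the other yields the equivalence, leaving the verification as an exercise); your pentagon computation via (\ref{SMM15}) and the round-trip identity $\mu\delta=\gamma$ are precisely the two nontrivial checks, and both are right. The only presentational quibble is that in task (a) the relation (\ref{SMM5}) does not follow by merely pushing $\gamma$ through $T$ and $Q$ --- it requires the identity $\mu\delta=\gamma$, which you only establish in task (c) using the pentagon and the triangles --- so that identity should be proved before (\ref{SMM5}) is claimed.
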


\begin{proof}
Setting $\gamma:=\mu\delta$ the (\ref{SMM1}-\ref{SMM18}) axioms imply the axioms of Lemma \ref{lem: gam axioms}.
Assuming the latter axioms and defining $\mu$ and $\delta$ by (\ref{def of mu}) and (\ref{def of delta}) the axioms
(\ref{SMM1}-\ref{SMM18}) are easily obtained.
\end{proof}

Every SMM $A$ contains distinguished submonoids
\begin{align}
Q(A)\ \subset\ F\ \supset\ S(A)\ \subset\ G\ \supset T(A)
\end{align}
where
\begin{align}
F&=Q(A)\delta\\
G&=\mu T(A)
\end{align}
and $S$ is the anti endomorphism 
\begin{equation}
\label{S}
S:A\to A^\op,\quad S(a)=\mu T(Q(a)\eta)\equiv Q(\eps T(a))\delta\,.
\end{equation}
Furthermore,
\begin{enumerate}
\item the endomorphisms $Q$, $S$, $T$ are all injective, 
\item $\pi:A\to A$, $\pi(a):=\eps a\eta$ is a common left inverse in $\Set$ of the endofunctions $Q$, $S$ and $T$,
\item their images, $Q(A)$, $S(A)$ and $T(A)$, pairwise commute,
\item $S(A)=F\cap G$.
\end{enumerate}
For example, (iv) can be shown as follows. By the two equivalent formulas in (\ref{S}) it is clear that $S(A)\subset F\cap G$.
Vice versa, suppose $x\in F\cap G$. Then there exist $a,b\in A$ such that $x=Q(a)\delta=\mu T(b)$. Multiplying with $\eps$, resp. $\eta$ we obtain $a=\eps x$ and $b=x\eta$. Introducing $y:=a\eta$ we can write  $Q(y)\eta=Q(a)Q(\eta)\eta=Q(a)\delta\eta=x\eta=b$ and therefore $S(y)=\mu T(Q(y)\eta)=\mu T(b)=x$. This proves 
$F\cap G\subset  S(A)$.

The following useful relations will also be used in the sequel without explicit mention:
\begin{align}
\label{mu S}
\mu S(a)&=\mu TQ(a)\\
S(a)\mu&=\mu TS(a)\\
\label{S delta}
S(a)\delta&=QT(a)\delta\\
\delta S(a)&=QS(a)\delta\\
\label{S eta}
S(a)\eta&=Q(a)\eta\\
\label{eps S}
\eps S(a)&=\eps T(a)\,.
\end{align}

\section{The category of modules}

The category $\Mod\A$ of modules over a skew monoidal monoid $\A=\bra A,\smp,\gamma,\eta,\eps\ket$ is by definition \cite{SMC} the Eilenberg-Moore category $\A^T$ of the canonical monad $T$. Since $\A^T$ is just a category while the category 
of modules over a bialgebroid is always monoidal, the challenge is to find a monoidal structure on $\A^T$. 
This problem has been solved for skew monoidal categories \cite{Sz: Swansea}, \cite[Theorem 8.1]{LS:normalization} provided the
underlying category has reflexive coequalizers and the skew monoidal product preserves such coequalizers in the 2nd argument. As we shall see, in case of skew monoidal monoids these assumptions are not necessary.

Since $\A$ has only one object, the objects of $\A^T$ are just elements $x\in A$ satisfying two equations
\begin{align}
\label{T-alg1}
x\mu &=xT(x)\\
\label{T-alg2}
x\eta&=1_A\,.
\end{align}
The arrows $t\in\A^T(x,y)$ are the elements $t\in A$ such that
\begin{equation}
\label{T-alg-mor}
tx=y T(t)\,.
\end{equation}

For a skew monoidal category the monoidal product, called the horizontal tensor product and denoted by $\hot$, of two $T$-algebras
$\alpha: TM\to M$ and $\beta:TN\to N$ is defined as follows. First  we introduce the notation $\mu_{M,N}:=(\eps_M\smp N)\ci\gamma_{M,R,N}$. Then the underlying object of $M\hot N$ is given by the reflexive coequalizer
\[
M\smp TN\longpair{\mu_{M,N}}{M\smp \beta}M\smp N\coequalizer{}M\hot N
\]
and the $T$-action on $M\hot N$ is given by unique factorization in the diagram
\begin{alignat}{2}
&R\smp(N\smp TM)\longpair{R\smp\mu_{N,M}}{R\smp(N\smp \alpha)}
&R\smp(N\smp M) \longcoequalizer{R\smp\pi}
&R\smp(N\hot{M}) \notag\\ 
\label{def of psi}
&\quad\parbox[c]{5pt}{\begin{picture}(5,40)\put(2,40){\vector(0,-1){40}}\end{picture}}\sst(\beta\smp TM)\ci\gamma_{R,N,TM}
&\ \parbox[c]{5pt}{\begin{picture}(5,40)\put(2,40){\vector(0,-1){40}}\end{picture}}\sst(\beta\smp M)\ci\gamma_{R,N,M}\qquad
&\quad\parbox[c]{5pt}{\begin{picture}(5,40)\dashline{3}(2,40)(2,2)\put(2,2){\vector(0,-1){0}}\end{picture}}\sst\psi\\
&N\smp TM\quad\quad\longpair{\mu_{N,M}}{N\smp \alpha}
&N\smp M \longcoequalizer{\pi}
&N\hot{M} \notag
\end{alignat}
provided $T$ preserves reflexive coequalizers. In case of skew monoidal monoids all these diagrams simplify radically: all objects
are the same and all $\mu$-s are the same element (\ref{def of mu}) of $A$. Thus the horizontal tensor product of
$x,y\in\A^T$ is the element $x\hotobj y$ of $A$ making the diagram in $\A$ 
\[
\parbox{120pt}{
\begin{picture}(120,85)(0,-7)
\put(-2,0){$A$}
\put(11,0){$\pair{\mu}{T(y)}$}
\put(60,0){$A$} 
\put(74,0){$\coequalizer{y}$}
\put(120,0){$A$}

\put(-2,60){$A$}
\put(11,60){$\pair{T(\mu)}{T^2(y)}$}
\put(60,60){$A$} 
\put(74,60){$\coequalizer{T(y)}$}
\put(120,60){$A$}

\put(3,55){\vector(0,-1){43}} \put(8,31){$\sst Q(x)\gamma$}
\put(65,55){\vector(0,-1){43}} \put(70,31){$\sst Q(x)\gamma$}
\put(125,55){\vector(0,-1){43}} \put(130,31){$\sst x\hotobj y$}
\end{picture}}
\]
commutative. Now it is easy to check that
\begin{equation}
x\hotobj y=y Q(x)\delta\,.
\end{equation}
Note that the 2nd row is the canonical split coequalizer of the $T$-algebra $y$, hence it is preserved by $T$. This proves that the horizontal tensor product of any pair of objects exists in $\A^T$ for any skew monoidal monoid $\A$. 

It is now an easy exercise to determine the horizontal tensor product of arrows $s\in\A^T(x_1,y_1)$ and $t\in\A^T(x_2,y_2)$.
It is given by the formula
\begin{equation}
s\hot t =y_2Q(s)\eta t\quad\in\ \A^T(x_1\hotobj x_2,y_1\hotobj y_2)\,.
\end{equation}
Notice that we use different notations for horizontal tensor product of objects and of arrows. This is necessary since the same element of $A$ can be an object and an (non-identity) arrow in $\A^T$.

\begin{pro} \label{pro: forg fun of modules}
Let $\A=\bra A, T, Q,\gamma,\eta,\eps\ket$ be a skew monoidal monoid. Then its module category $\A^T$ equipped with the horizontal tensor product is a strict monoidal category. In this category $\mu$ is the underlying object of a comonoid
\[
\eps\larr{\eps} \mu\rarr{\delta} \mu\hotobj \mu\,.
\]
The associated representable functor is a faithful strong monoidal functor 
\[
\A^T(\mu,\under)\ :\ \A^T\ \to\ \,_A\Set_A\,,\qquad x\ \mapsto\ xT(A)
\]
to the category of $A$-$A$-bimodules (=$A$-bisets or $(A^\op\x A)$-sets).
\end{pro}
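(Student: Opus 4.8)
The plan is to establish the three assertions in turn, reducing everything to the monoid identities of Proposition \ref{pro: SMM mu-del axioms} and the auxiliary relations collected at the close of Section 1. I use throughout that an object of $\A^T$ is an $x\in A$ with $x\mu=xT(x)$ and $x\eta=1$, that composition in $\A^T$ is the product of $A$, that $1\in A$ is the identity arrow at every object, and the two explicit formulas $x\hotobj y=yQ(x)\delta$ and $s\hot t=y_2Q(s)\eta t$ derived above. For the strict monoidal structure I first take $\eps$ as the unit object: it is an object by (\ref{SMM16}) and (\ref{SMM18}), and $\eps\hotobj x=x=x\hotobj\eps$ follows from $Q(\eps)\delta=1$ (\ref{SMM14}), $a\eps=\eps Q(a)$ and $\eps\delta=1$ (\ref{SMM13}). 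Strict associativity on objects amounts, after cancelling $zQ(y)$, to $Q^2(x)Q(\delta)\delta=\delta Q(x)\delta$, which is immediate from the intertwiner $\delta Q(a)=Q^2(a)\delta$ and $Q(\delta)\delta=\delta^2$ (\ref{SMM12}). That $\hot$ is a bifunctor is checked next: the target of $s\hot t$ is correct because $\mu\delta T(\eta)=\delta$ (as $\gamma=\mu\delta$ and $\delta=\gamma T(\eta)$); identities are preserved since $x\eta=1$; and the interchange law $(s'\hot t')(s\hot t)=(s's)\hot(t't)$ is verified by inserting the object identity $z_2\mu=z_2T(z_2)$ and collapsing with $\mu\eta=1$ (\ref{SMM10}) and $\mu Q(a)=Q(a)\mu$ (\ref{SMM7}). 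The residual strict unit and associativity constraints on arrows are analogous direct computations.

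For the comonoid claim, $\mu$ is an object by (\ref{SMM9})--(\ref{SMM10}); the counit $\eps$ is an arrow $\mu\to\eps$ by (\ref{SMM16}) and the comultiplication $\delta$ an arrow $\mu\to\mu\hotobj\mu$ by (\ref{SMM15}) together with (\ref{SMM7}). The two counit laws reduce respectively to $\mu Q(\eps)\eta\delta=Q(\eps)\mu\eta\delta=Q(\eps)\delta=1$ (using (\ref{SMM7}), (\ref{SMM10}), (\ref{SMM14})) and to $\eps\delta=1$ (\ref{SMM13}), while coassociativity $(\delta\hot 1)\delta=(1\hot\delta)\delta$ collapses on both sides to $\delta^2$ via $Q(\delta)\delta=\delta^2$ (\ref{SMM12}) and $\delta\eta=Q(\eta)\eta$ (\ref{SMM17}).

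For the representable functor I first identify $\A^T(\mu,x)=xT(A)$: each $xT(a)$ is an arrow $\mu\to x$ by (\ref{SMM1}) and $x\mu=xT(x)$, and conversely any $t\in\A^T(\mu,x)$ obeys $t=xT(t\eta)$ because $\mu T(\eta)=1$ (\ref{SMM11}). The biset structure is read off from $\End_{\A^T}(\mu)=\A^T(\mu,\mu)$: both $T(a)$ and $S(a)$ are endomorphisms of $\mu$ (for $S$ by the relation $S(a)\mu=\mu TS(a)$), and their images commute by property (iii); precomposition therefore yields commuting actions $w\ract a:=wT(a)$ through the homomorphism $T$ and $a\lact w:=wS(a)$ through the anti-homomorphism $S$, making $xT(A)$ an $A$-$A$-biset. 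Postcomposition $\A^T(\mu,f)\colon w\mapsto fw$ preserves both actions, so the functor lands in ${}_A\Set_A$, and it is faithful because $x\in\A^T(\mu,x)$ while $f=(fx)\eta$ recovers $f$ from its image there. For strong monoidality, writing $t=yT(q)$ the arrow tensor simplifies to $s\hot t=yQ(s)T(q)$, so on the generator $s=x$ one obtains $(x\hot t)\circ\delta=yQ(x)\delta T(q)=(x\hotobj y)T(q)$; since each $\A^T(\mu,z)$ is free of rank one on the right (on the generator $z$), this exhibits both the tensor comparison $\A^T(\mu,x)\oA\A^T(\mu,y)\to\A^T(\mu,x\hotobj y)$ and the unit comparison $A\to\A^T(\mu,\eps)$ as bijections, hence as isomorphisms of bisets.

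The step I expect to be the real obstacle is the strong-monoidality part of (iii): one must first pin down the correct biset structure---left action through the anti-endomorphism $S$, right action through $T$---and then show that the comparison maps induced by the comonoid $\mu$ are bijective, not merely biset-linear. This is exactly where the special geometry of a SMM enters, namely that $S(A)$ and $T(A)$ are commuting submonoids of $\End_{\A^T}(\mu)$ and that the hom-sets are rank-one free over the base; checking that the unit comparison recovers the regular biset and that the balancing over $A$ matches the coequalizer defining $\hotobj$ is the part that leans most heavily on the relations (\ref{mu S})--(\ref{eps S}).
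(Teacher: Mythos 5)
Your proposal is correct and follows essentially the same route as the paper: the same explicit formulas $x\hotobj y=yQ(x)\delta$ and $\A^T(\mu,x)=xT(A)$, the same bimodule structure $a_1\cdot t\cdot a_2=tS(a_1)T(a_2)$, and the same comparison maps, with bijectivity obtained from rank-one freeness of the hom-sets (equivalent to the paper's explicit inverses $r\mapsto x\oA yT(r\eta)$ and $r\mapsto r\eta$). You merely spell out more of the routine verifications (bifunctoriality, strict associativity) that the paper omits; the one stray detail is that coassociativity needs only (\ref{SMM7}), (\ref{SMM10}) and (\ref{SMM12}), not (\ref{SMM17}).
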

\begin{proof}
Strict associativity of $\hotobj$ and $\hot$ is a routine calculation. So is unitality; the unit object is $\eps$.
By (\ref{SMM9}-\ref{SMM10}) the $\mu$ is an object of $\A^T$. It has a special property that for any object $x\in\A^T$ the element
$x\in A$ is also an arrow $x:\mu\to x$ in $\A^T$. Indeed, compare (\ref{T-alg1}) with (\ref{T-alg-mor}).
It follows that the functor $\A^T(\mu,\under)$ is faithful: If $sx=tx$ for a pair $s,t$ of parallel arrows $x\to y$ then $s=t$ by
(\ref{T-alg2}). Now $\eps:\mu\to\eps$ is an arrow of $\A^T$ and so is $\delta:\mu\to\mu\hotobj \mu$ by (\ref{SMM15}).
The comonoid axioms for $\bra\mu,\delta,\eps\ket$ reduce to the comonad axioms (\ref{SMM12}-\ref{SMM14}) after noticing that
for any arrow $t\in\A^T$  we have $1_\mu\hot t=t$ and $t\hot 1_\mu=Q(t)$, as elements of $A$.

Before turning to the monoidal structure of the functor $\A^T(\mu,\under)$ we need an explicit computation of the intertwiner
set $\A^T(\mu, x)$. If $s:\mu\to x$ is an intertiner then $s=s\mu T(\eta)=xT(s\eta)\in xT(A)$. Vice versa, if $a\in A$ then
$xT(a)\mu=x\mu T^2(a)=xT(xT(a))$, so $xT(a)$ is an intertwiner $\mu\to x$. This proves the equality $\A^T(\mu,x)=xT(A)$,
as subsets of $A$. In particular, $\A^T(\mu,\mu)=\mu T(A)$ is a monoid containing both $T(A)$ and $S(A)$ as submonoids.
This allows us to define the $A$-$A$-bimodule structure on $\A^T(\mu,x)$ by right multiplication
\[
a_1\cdot t\cdot a_2\ :=\ tS(a_1)T(a_2),\qquad t\in\A^T(\mu,x),\ a_1,a_2\in A\,.
\]
The comonoid structure of $\mu$ induces the following monoidal structure for the functor $\A^T(\mu,\under)$. 
\begin{align}
\notag\A^T(\mu,x)\oA \A^T(\mu,y)&\to\A^T(\mu,x\hotobj y)\\
\label{Phi_2}
s\oA t&\mapsto (s\hot t)\ci\delta\ =\ tQ(s)\delta
\end{align}
and the arrow $A\to\A^T(\mu,\eps)$ in $_A\Set_A$ is given by $1_A\mapsto \eps$ which is well-defined since
$a\cdot\eps=\eps S(a)=\eps T(a)=\eps\cdot a$ by (\ref{eps S}). 
This monoidal structure is strong: The inverse of (\ref{Phi_2}) is
\[
r\in\A^T(\mu, x\hotobj y)\ \mapsto\ x\oA y T(r\eta)\ \in\ \A^T(\mu,x)\oA \A^T(\mu,y)
\]
and the inverse of the identity constraint is 
\[
r\in\A^T(\mu,\eps)\ \mapsto r\eta \in A
\]
since $\A^T(\mu,\eps)=\eps T(A)$.
\end{proof}

Obviously, the next task is to factorize the forgetful functor of the above Proposition through the forgetful functor of $G$-modules,
where $G=\A^T(\mu,\mu)$, and show that $G$ is a right bialgebroid over $A$. The conspicuous property of this bialgebroid is that it is a submonoid of its own base monoid.

\begin{thm} \label{thm: G}
The forgetful functor of Proposition \ref{pro: forg fun of modules} factorizes through the forgetful functor $\Set_G\to \,_A\Set_A$
of a right $A$-bialgebroid $G$ via a fully faithful $\A^T\to \Set_G$ where the bialgebroid $G$ is defined by
\begin{align*}
\text{underlying monoid}&:=\A^T(\mu,\mu)=\mu T(A)\\
\text{source } s^G:A\to G,\quad s^G(a)&:=T(a)\\
\text{target } t^G:A^\op\to G,\quad t^G(a)&:=S(a)\\
\text{comultiplication } \Delta^G:G\to G\oA G,\quad \Delta^G(g)&:=\mu\oA \mu T(\delta g \eta)\\
\text{counit }\epsilon^G:G\to A,\quad \epsilon^G(g)&:=\pi(g)=\eps g\eta\,.
\end{align*}
\end{thm}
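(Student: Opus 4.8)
The plan is to read off the entire structure from the faithful strong monoidal functor $\Phi:=\A^T(\mu,\under):\A^T\to{}_A\Set_A$ of Proposition \ref{pro: forg fun of modules} together with the comonoid $\eps\larr{\eps}\mu\rarr{\delta}\mu\hotobj\mu$ established there. Since strong monoidal functors preserve comonoids, $\Phi(\mu)=G$ is automatically a comonoid in $({}_A\Set_A,\oA)$, i.e. an $A$-coring: its comultiplication $\Phi_2^{-1}\ci\Phi(\delta)$ and its counit $\Phi(\eps)$ are coassociative, counital and $A$-bimodule maps with no further work. Evaluating them through the structure isomorphisms of Proposition \ref{pro: forg fun of modules} reproduces the stated formulas: $\Phi(\delta)(g)=\delta g$ followed by the inverse constraint $r\mapsto\mu\oA\mu T(r\eta)$ of (\ref{Phi_2}) gives $\Delta^G(g)=\mu\oA\mu T(\delta g\eta)$, and $\Phi(\eps)(g)=\eps g$ followed by $\A^T(\mu,\eps)\iso A$, $r\mapsto r\eta$, gives $\epsilon^G(g)=\eps g\eta=\pi(g)$. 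Independently, $G=\A^T(\mu,\mu)$ is a monoid under composition, which is multiplication in $A$, with unit $1_A=\mu T(\eta)$ by (\ref{SMM11}); the maps $s^G=T$ and $t^G=S$ land in $G$ because $T(a)=\mu T(\eta a)$ and, by (\ref{S}), $S(a)=\mu T(Q(a)\eta)$, with $s^G$ a homomorphism, $t^G$ an anti-homomorphism, and commuting images by property (iii), and the coring's underlying bimodule $a_1\cdot g\cdot a_2=gS(a_1)T(a_2)=g\,t^G(a_1)s^G(a_2)$ is exactly the right-bialgebroid one.

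What is left are the genuinely bialgebroid compatibilities between the algebra and the coring. First I would dispatch the counit: $\epsilon^G(1_G)=\eps\eta=1$ by (\ref{SMM18}), and its weak multiplicativity (expressing $\epsilon^G(gh)$ through $\epsilon^G(g\,s^G\epsilon^G(h))$ and $\epsilon^G(g\,t^G\epsilon^G(h))$) by reducing $\eps gh\eta$ with the intertwiner and (co)monad relations among (\ref{SMM1})--(\ref{SMM18}). The heart of the proof, and the step I expect to be the main obstacle, is that $\Delta^G$ corestricts to the Takeuchi product $G\times_A G$ and is multiplicative there. Concretely one computes
\[
\Delta^G(g)\Delta^G(h)=\mu^2\oA\mu T(\delta g\eta)\,\mu T(\delta h\eta),
\]
slides the inner $\mu$ across $T$ by (\ref{SMM1}), collapses $\mu^2=\mu T(\mu)$ by (\ref{SMM9}), and transports the resulting $T(\mu)$ across the tensor by the balancing relation $pT(a)\oA q=p\oA qS(a)$; the remaining task is to reduce the right-hand factor to $\mu T(\delta gh\eta)$ and so recover $\Delta^G(gh)$, and this is exactly where the skew-monoidality relations do the real work. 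The identity $\Delta^G(1_G)=1\oA1$ is the same calculation at $g=h=1_A$, and keeping track of the $\oA$-balancing is the delicate point.

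For the factorization, the right $G=\End_{\A^T}(\mu)$-action on $\A^T(\mu,x)$ by precomposition, $r\cdot g=rg$, is automatic from functoriality of $\A^T(\mu,\under)$, and restricting it along $(s^G,t^G)$ returns the bimodule action $rS(a_1)T(a_2)$ of Proposition \ref{pro: forg fun of modules}; hence $\Phi$ factors as $\A^T\rarr{\tilde\Phi}\Set_G\to{}_A\Set_A$.

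Finally, full faithfulness of $\tilde\Phi$: faithfulness is inherited from $\Phi$. For fullness note that $\A^T(\mu,x)=xT(A)$ is a cyclic right $G$-set generated by $x$, since every $r=xT(a)$ equals $x\cdot T(a)$ with $T(a)\in G$ (using $x\eta=1_A$ from (\ref{T-alg2})). Thus a $G$-map $f:\A^T(\mu,x)\to\A^T(\mu,y)$ is determined by $h:=f(x)\in yT(A)$, and I would set $k:=h\eta$, so that $yT(k)=yT(h\eta)=h$ automatically. It then suffices to show $k$ is an arrow $x\to y$, equivalently $kx=h$, i.e. $h\eta x=h$: reading the algebra relation $x\mu=xT(x)$ of (\ref{T-alg1}) as $x\cdot\mu=x\cdot T(x)$ with $\mu,T(x)\in G$, equivariance of $f$ forces $h\mu=hT(x)$, and therefore, using $\eta x=T(x)\eta$ from (\ref{SMM2}) and $\mu\eta=1$ from (\ref{SMM10}),
\[
h\eta x=hT(x)\eta=h\mu\eta=h.
\]
Then $\A^T(\mu,k)=f$ holds on the generator, hence everywhere, and uniqueness of $k$ follows from faithfulness.
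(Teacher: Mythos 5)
Your proposal is correct and follows essentially the same route as the paper: the fullness argument (read off $t=f(x)\eta$ from the image of the generator $x$, use $G$-equivariance under $\mu$ and $T(x)$ together with $x\mu=xT(x)$ to conclude $t\in\A^T(x,y)$, then propagate along $s=xT(s\eta)$) is the paper's own argument almost verbatim, and faithfulness is inherited identically. The bialgebroid axioms, which the paper dismisses as ``a short exercise'', you organize more explicitly by transporting the comonoid structure of $\mu$ along the strong monoidal functor $\A^T(\mu,\under)$; the one step you leave unfinished, multiplicativity of $\Delta^G$, does close up as you predict: after sliding with (\ref{SMM1}), (\ref{SMM8}), the balancing $pT(a)\oA q=p\oA qS(a)$ and $\mu S(a)=\mu TQ(a)$, it reduces precisely to axiom (\ref{SMM15}), $Q(\mu)\mu\delta T(\delta)=\delta\mu$.
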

\begin{proof}
The verification of the right bialgebroid axioms is a short exercise. The functor $\A^T\to \Set_G$ is faithful since the functor $\A^T\to \,_A\Set_A$ was already faithful. In order to prove fullness let $f:xT(A)\to yT(A)$ be a $G$-module map, $f(sg)=f(s)g$.
Then $f(x)\mu=f(x)T(x)$ and, writing $f(x)=yT(t)$ with a unique $t\in A$, we obtain $yT(t)\mu=yT(tx)$ hence $yT(t)=tx$.
This proves that $t\in\A^T(x,y)$ and $f(s)=f(xT(s\eta))=f(x)T(s\eta)=yT(ts\eta)=ts$ therefore the functor is full.
\end{proof}
As usual for right bialgebroids we consider $G$ as an $A$-bimodule via $a_1\cdot g\cdot a_2=gS(a_1)T(a_2)$. 
Notice that $\mu$ is a free generator for $G$ as right $A$-module, the $G_A$ is free of rank 1. This explains the simple formula for the coproduct since $G\oA G\cong G$ at least as right $A$-modules. The left $A$-module structure is by far not so trivial. 

\begin{defi}
A right module $M$ over a right $A$-bialgebroid $G$ (or a left module over a left bialgebroid) is called source-regular if the $A$-module obtained from $M$ by restriction along the source map $s^G:A\to G$ is isomorphic to the right regular $A$-module $A_A$ (resp. the left regular $_AA$).
\end{defi}

\begin{cor} \label{cor: Set_G^reg}
Let $\A$ be a SMM. Then there is a source regular right bialgebroid $G$ such that $\Mod \A\equiv\A^T$ is monoidally equivalent 
to the category $\Set_G^{\reg}$ of source regular right $G$-modules.
\end{cor}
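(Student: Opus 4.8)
The plan is to upgrade the fully faithful functor $K:\A^T\to\Set_G$ of Theorem~\ref{thm: G}, $x\mapsto\A^T(\mu,x)=xT(A)$ with right $G$-action by composition, to a monoidal equivalence onto the full subcategory $\Set_G^{\reg}$. Here $G$ is the bialgebroid of that theorem, and it is source-regular in the sense required because, by the remark following Theorem~\ref{thm: G}, $G_A=\mu T(A)$ is free of rank $1$ via $s^G$. Three things must then be done: check that $K$ factors through $\Set_G^{\reg}$, that it is essentially surjective onto it, and that it is strong monoidal for the bialgebroid tensor product. The first and last are short; the second is the real content.

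First I would show $K$ lands in $\Set_G^{\reg}$. Since $s^G(a)=T(a)$, the source-restriction of $K(x)=xT(A)$ is the right $A$-set with action $xT(a)\cdot b=xT(ab)$, and the map $a\mapsto xT(a)$ is a right $A$-linear bijection $A\to xT(A)$ whose inverse is $w\mapsto w\eta$, because $xT(a)\eta=x\eta a=a$ by (\ref{SMM2}) and (\ref{T-alg2}). Hence each $K(x)$ is source-regular and $K$ corestricts to a still fully faithful functor $\A^T\to\Set_G^{\reg}$.

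The essential point is surjectivity on objects. Given a source-regular $M$, choose $m_0\in M$ with $a\mapsto m_0\cdot T(a)$ a bijection $A\iso M$; since $\mu\in G$, define $x\in A$ by $m_0\cdot\mu=m_0\cdot T(x)$. I would first verify $x\in\A^T$: acting on $m_0$ by $\mu$ twice and rewriting $\mu^2=\mu T(\mu)$ via (\ref{SMM9}) together with $T(a)\mu=\mu T^2(a)$ via (\ref{SMM1}) yields, after cancelling the free generator and using injectivity of $T$, the relation $x\mu=xT(x)$; acting by the unit $1_G=\mu T(\eta)$ via (\ref{SMM11}) yields $x\eta=1$, so (\ref{T-alg1})--(\ref{T-alg2}) hold. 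Then $\Theta:m_0\cdot T(a)\mapsto xT(a)$ is a source-linear bijection $M\to K(x)$, and because every $g\in G$ is a product $\mu\cdot T(a)$ of elements of $G$, the $G$-linearity of $\Theta$ reduces to linearity under $\mu$ alone, which is precisely the identity $x\mu=xT(x)$ combined with (\ref{SMM1}). Hence $M\cong K(x)$ and $K$ is an equivalence of categories.

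Finally, Proposition~\ref{pro: forg fun of modules} already makes the functor strong monoidal into ${}_A\Set_A$, with coherence $s\oA t\mapsto tQ(s)\delta$; it remains to see that this and the unit constraint are morphisms of $G$-modules for the coproduct $\Delta^G(g)=\mu\oA\mu T(\delta g\eta)$, a compatibility that was built into $\Delta^G$ through the identification $G\oA G\cong G$. Since $K$ is a strong monoidal equivalence, the bialgebroid tensor product of $K(x)$ and $K(y)$ is isomorphic to $K(x\hotobj y)$, which is source-regular, so $\Set_G^{\reg}$ is closed under the tensor and contains the unit $K(\eps)$; thus it is a monoidal subcategory and $K$ a monoidal equivalence $\A^T\simeq\Set_G^{\reg}$. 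I expect the reconstruction of the previous paragraph to be the only genuine obstacle: checking that the element $x$ read off from the $\mu$-action on a free generator really satisfies the two $T$-algebra equations. The corestriction to source-regular modules and the monoidal compatibility are routine once $G$ and $K$ are in hand.
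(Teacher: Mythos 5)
Your proof is correct and follows essentially the same route as the paper: the fully faithful functor $x\mapsto xT(A)$ from Theorem~\ref{thm: G} clearly lands in the source-regular modules, and the only substantive point is essential surjectivity, which both you and the paper establish by reading off $x$ from $m_0\cdot\mu=m_0\cdot T(x)$ on a free generator and verifying (\ref{T-alg1})--(\ref{T-alg2}) via (\ref{SMM1}), (\ref{SMM9}) and (\ref{SMM11}). Your extra remarks on the corestriction and on monoidal compatibility are details the paper leaves implicit, and they check out.
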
 
\begin{proof}
Clearly, the image of the forgetful functor $\A^T(\mu,\under)$ consist only of the source-regular modules $xT(A)$ for
$x\in\ob\A^T$. 

Let $M$ be a source-regular right $G$-module and let $m_0\in M$ be a free generator of $M_A$, so $A\to M$, $a\mapsto 
m_0\cdot T(a)$ is a bijection. Then $m_0\cdot\mu=m_0\cdot T(x)$ for a unique $x\in A$. From the equations
\[
m_0\cdot T(x\mu)=m_0\cdot \mu T(\mu)=m_0\cdot \mu^2=m_0\cdot T(x)\mu=m_0\cdot \mu T^2(x)=m_0\cdot T(xT(x))
\]
it follows that $x\mu=xT(x)$. Similarly, $m_0\cdot T(x\eta)=m_0\cdot\mu T(\eta)=m_0$ implies $x\eta=1$. This proves that
$x$ is a $T$-algebra such that $M\cong xT(A)$ as $G$-modules.
\end{proof}

\section{Comodules and the dual bialgebroid} \label{sec: comod}

We shall be very brief about comodules since all what we have to say can be obtained from the structure of modules by dualizing,
i.e., $\Comod \A=\A_Q$ is nothing but $\Mod \A^{\op,\rev}$.

The objects of $\A_Q$ are elements $u\in A$ satisfying $Q(u) u=\delta u$ and $\eps u=1$. The arrows $s\in\A_Q(u,v)$ are
elements $s\in A$ satisfying $Q(s)u=vs$.

\begin{pro}
The Eilenberg-Moore category $\A_Q$ of the comonad $Q$ is a strict monoidal category w.r.t. the vertical tensor product defined for objects by $u\votobj v:= \mu T(v)u$ and for arrows $s:u_1\to v_1$, $t:u_2\to v_2$ by $s\vot t:=t\eps T(s)u_2$. 
The element $\delta\in A$ is the underlying object of the monoid $\bra \delta,\mu,\eta\ket$ in $\A_Q$ and $\delta$ is a cogenerator in $\A_Q$.
The endomorphism monoid of $\delta$ is the underlying monoid of a source-regular left $A$-bialgebroid $F$ defined by
\begin{align*}
\text{underlying monoid}\ :\ \A_Q(\delta,\delta)&=Q(A)\delta\\
\text{source } s_F:A\to F,\quad s_F(a)&=Q(a)\\
\text{target } t_F:A^\op\to F,\quad t_F(a)&=S(a)\\
\text{comultiplication } \Delta_F:\,_AF_A\to \,_A(F\oA F)_A,\quad \Delta_F(f)&=Q(\eps f\mu)\delta\oA \delta\\
\text{counit }\epsilon_F:\,_AF_A\to \,_AA_A,\quad \epsilon_F(f)&=\pi(f)\equiv\eps f\eta
\end{align*}
where in the last two lines the $A$-bimodule structure of $F$ is defined by $a_1\cdot f\cdot a_2:= Q(a_1)S(a_2)f$.
\end{pro}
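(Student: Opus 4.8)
The plan is to exploit the duality announced at the opening of this section: the opposite–reverse $\A^{\op,\rev}$ of a SMM is again a SMM, and its canonical monad is precisely the comonad $Q$ of $\A$, so that $\A_Q=\Mod{\A^{\op,\rev}}$ as Eilenberg–Moore categories. Every clause of the Proposition is then read off from the corresponding module statement (Proposition \ref{pro: forg fun of modules} and Theorem \ref{thm: G}) applied to $\A^{\op,\rev}$, once the translation dictionary is fixed. First I would check that passing to $\A'=\A^{\op,\rev}$ swaps the two endomorphisms, $T\leftrightarrow Q$, and the two distinguished pairs, $\mu\leftrightarrow\delta$ and $\eta\leftrightarrow\eps$, while reversing the monoid multiplication of $A$. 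Concretely, writing $\mu'=Q'(\eps')\gamma'$ and $\delta'=\gamma'T'(\eta')$ for the derived data of $\A'$ and using $T'=Q$, $Q'=T$, $\eps'=\eta$, $\eta'=\eps$, one computes $\mu'=\delta$ and $\delta'=\mu$. This is the heart of the bookkeeping, and verifying that $\A'$ indeed satisfies the eleven axioms of Lemma \ref{lem: gam axioms} (equivalently the list \eqref{SMM1}--\eqref{SMM18}) is where the care is needed.

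Granting this, the object and arrow equations transport directly. The $T'$-algebra conditions $x\mu'=xT'(x)$, $x\eta'=1$ for $\A'$, read in $A^\op$, become $Q(u)u=\delta u$ and $\eps u=1$, which are exactly the stated defining equations for objects of $\A_Q$; likewise the morphism condition $tx=yT'(t)$ turns into $Q(s)u=vs$, once the arrow-direction convention for $(-)^\op$ is fixed as in the opening of the section. The horizontal product $x\hotobj y=yQ(x)\delta$ of Proposition \ref{pro: forg fun of modules}, transported through the dictionary (and with the factor order reversed by $\rev$), yields the vertical product $u\votobj v=\mu T(v)u$, and the arrow formula $s\hot t=y_2Q(s)\eta t$ yields $s\vot t=t\eps T(s)u_2$; this simultaneously establishes strict monoidality of $\A_Q$. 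The comonoid $\bra\mu,\delta,\eps\ket$ of $\A^T$ dualizes to the monoid $\bra\delta,\mu,\eta\ket$ of $\A_Q$, and the generator $\mu$ becomes the cogenerator $\delta$.

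The bialgebroid $F$ is then the image under the dictionary of the right bialgebroid $G'$ attached to $\A'$ by Theorem \ref{thm: G}. A pleasant point to record is that the anti-endomorphism $S$ is self-dual: computing $S'(a)=\mu'T'(Q'(a)\eta')$ in $A'$ and using the second formula in \eqref{S} gives $S'(a)=Q(\eps T(a))\delta=S(a)$. Hence the target $t^{G'}(a)=S'(a)$ becomes $t_F(a)=S(a)$, the source $s^{G'}(a)=T'(a)$ becomes $s_F(a)=Q(a)$, the counit $\pi'$ becomes $\pi$, and the coproduct $\mu'\oA\mu'T'(\delta'g\eta')$ unwinds to $Q(\eps f\mu)\delta\oA\delta$, while the underlying monoid $\mu'T'(A')$ becomes $Q(A)\delta$. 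The right/left interchange of bialgebroid handedness is exactly the effect of $(-)^{\op,\rev}$, so $F$ comes out as a \emph{left} $A$-bialgebroid, and its source-regularity is inherited from that of $G'$ via Corollary \ref{cor: Set_G^reg}.

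The main obstacle is entirely in the first paragraph: certifying $\A^{\op,\rev}$ as a SMM and making the identification $\A_Q\cong\Mod{\A^{\op,\rev}}$ precise, including the reversal of arrow directions and the self-duality $S'=S$. Once the dictionary $T\leftrightarrow Q$, $\mu\leftrightarrow\delta$, $\eta\leftrightarrow\eps$ (with reversed multiplication) is in place, no genuinely new computation is required: each assertion is the verbatim translate of one already proved for modules. I would keep a direct verification in reserve as a sanity check — for instance confirming by hand, from \eqref{SMM1}--\eqref{SMM18}, that $u\votobj v=\mu T(v)u$ satisfies $Q(u\votobj v)(u\votobj v)=\delta(u\votobj v)$ — since the bookkeeping of the duality is the only place an error could hide.
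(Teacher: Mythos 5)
Your proposal is correct and follows exactly the route the paper intends: the paper offers no separate proof of this Proposition, relying instead on its opening remark that $\Comod\A=\A_Q$ is $\Mod{\A^{\op,\rev}}$, so everything is the dualization of Proposition \ref{pro: forg fun of modules} and Theorem \ref{thm: G}. Your dictionary ($T'=Q$, $Q'=T$, $\eta'=\eps$, $\eps'=\eta$, hence $\mu'=\delta$, $\delta'=\mu$ and $S'=S$) is the right bookkeeping and checks out against the stated formulas.
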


\begin{thm}
The forgetful functor $\A_Q(\under,\delta) :\A_Q^\op\to\,_A\Set_A$ is strong monoidal and factors through a monoidal equivalence
$\A_Q^\op\cong \,_F^\reg\Set$ of $\A_Q^\op$ with the category of source-regular left $F$-modules.
\end{thm}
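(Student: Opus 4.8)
The plan is to obtain the theorem from its module counterpart, Theorem~\ref{thm: G} and Corollary~\ref{cor: Set_G^reg}, by transporting those results along the duality $\Comod\A=\A_Q=\Mod\A^{\op,\rev}$ recorded at the start of this section. Write $\A'=\A^{\op,\rev}$ for the opposite--reverse SMM; its underlying monoid is $A^{\op}$ and the structural data are related to those of $\A$ by the dictionary $T\leftrightarrow Q$, $\mu\leftrightarrow\delta$, $\eta\leftrightarrow\eps$, with the anti-endomorphism $S$ of~(\ref{S}) mapped to the corresponding anti-endomorphism of $\A'$. Under this dictionary the canonical monad of $\A'$ is $Q$, so $\Mod{\A'}=(\A')^{Q}$ agrees with the comodule category $\A_Q$, the only subtlety being the arrow reversal inherent in passing between a comonad and a monad. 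This reversal is precisely what puts the $\op$ on the domain $\A_Q^{\op}$ of the asserted equivalence, and it matches the fact that $\A_Q(\under,\delta)$ is a \emph{representable} (hence contravariant) functor, represented by the monoid $\langle\delta,\mu,\eta\rangle$ of the preceding Proposition, dually to the corepresentable $\A^T(\mu,\under)$ of Proposition~\ref{pro: forg fun of modules}.

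Next I would match the two bialgebroids. Applying Theorem~\ref{thm: G} to $\A'$ produces a right $A^{\op}$-bialgebroid $G'$ with underlying monoid $(\A')^{Q}(\delta,\delta)$, source $T'=Q$, target $S$, and coproduct and counit obtained from those of $G$ under the dictionary. A right bialgebroid over $A^{\op}$ is the same datum as a left bialgebroid over $A$, and comparing the explicit formulas $s_F=Q$, $t_F=S$, $\Delta_F(f)=Q(\eps f\mu)\delta\oA\delta$, $\eps_F(f)=\eps f\eta$ of the preceding Proposition with the transported data of $G'$ shows that $F$ \emph{is} $G'$ read on the other side. Consequently source-regular left $F$-modules coincide with source-regular right $G'$-modules, ${}_F^{\reg}\Set=\Set_{G'}^{\reg}$, while ${}_{A^{\op}}\Set_{A^{\op}}={}_A\Set_A$. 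Then Theorem~\ref{thm: G} for $\A'$ gives a fully faithful factorization of the forgetful functor through $G'$-modules, and Corollary~\ref{cor: Set_G^reg} upgrades this to a monoidal equivalence onto the source-regular ones; transported back, these become exactly the factorization of $\A_Q(\under,\delta)$ and the equivalence $\A_Q^{\op}\cong{}_F^{\reg}\Set$. Strong monoidality of $\A_Q(\under,\delta)$ is the image of the strong monoidality proved in Proposition~\ref{pro: forg fun of modules}, using that $\delta$ is a cogenerator.

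For readers who prefer a self-contained argument, the same statement can be proved by transcribing the module proofs under the above substitution. One first computes the representing set $\A_Q(u,\delta)$ (the dual of $\A^T(\mu,x)=xT(A)$) and reads off its $A$-bimodule structure from $F=\A_Q(\delta,\delta)$ via $a_1\cdot t\cdot a_2=Q(a_1)S(a_2)t$; one then writes down the lax structure map coming from the multiplication of the monoid $\delta$ together with an explicit inverse dual to $r\mapsto x\oA yT(r\eta)$, which proves strongness; fullness is the dual of the short computation in the proof of Theorem~\ref{thm: G}; and the identification of the essential image with the source-regular modules is the dual of the free-generator argument in Corollary~\ref{cor: Set_G^reg}.

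The routine parts are these individual verifications, each a mechanical rewrite of an argument already given for modules. The genuine obstacle is the bookkeeping of the compound duality $(\op,\rev)$ together with the base change $A\rightsquigarrow A^{\op}$ and the right/left swap of bialgebroids: one must check that \emph{every} arrow direction and the variance of the monoidal forgetful functor line up, so that one lands on $\A_Q^{\op}\cong{}_F^{\reg}\Set$ with the correct handedness rather than on some reflected version. The explicit data of $F$ in the preceding Proposition serve as the decisive consistency check: since its source is $Q$ and its target is $S$, exactly as forced by the dictionary applied to $G$ (whose source is $T$ and target is $S$), the duality has been set up consistently and the transport is legitimate.
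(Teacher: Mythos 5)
Your proposal is correct and is essentially the paper's own argument: the paper gives no explicit proof of this theorem, relying entirely on the dualization principle $\Comod\A=\A_Q=\Mod\A^{\op,\rev}$ announced at the start of Section~\ref{sec: comod}, which is precisely the transport of Theorem~\ref{thm: G} and Corollary~\ref{cor: Set_G^reg} along the dictionary $T\leftrightarrow Q$, $\mu\leftrightarrow\delta$, $\eta\leftrightarrow\eps$ that you describe. Your write-up merely makes explicit the bookkeeping (base change $A\rightsquigarrow A^{\op}$, right/left bialgebroid swap, the $\op$ on $\A_Q$) that the paper leaves implicit.
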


In order to characterize $\A_Q$ in terms of $G$ we need to perform a duality transformation also on the level of bialgebroids.
\begin{lem}
The left dual $G^*=\Hom(G_A,A_A)$ of the right $A$-bialgebroid $G$ can be identified with the left $A$-bialgebroid $F$ by the
non-degenerate pairing 
\[
\bra\under,\under\ket\ :\ F\x G\ \to\ A,\qquad \bra f,g\ket:=\eps fg\eta
\]
satisfying the following properties
\begin{align}
\bra f,a_1\cdot g\cdot a_2\ket&=\bra fQ(a_1),g\ket a_2\\
\bra a_1\cdot f\cdot a_2,g\ket&=a_1\bra f,T(a_2)g\ket\\
\bra f,g g'\ket&=\bra f\oneB\cdot\bra f\twoB,g\ket,g'\ket\\
\label{F-mul}
\bra f f',g\ket&=\bra f,\bra f',g\oneT\ket\cdot g\twoT\ket\\
\bra f, 1_G\ket&=\pi(f)\\
\label{F-uni}
\bra 1_F,g\ket&=\pi(g)\,.
\end{align}
\end{lem}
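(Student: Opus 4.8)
The plan is to exhibit the map $F\to G^*$, $f\mapsto\langle f,\under\rangle$, to prove it is a bijection, and then to verify the six displayed identities by direct computation from the relations of Proposition \ref{pro: SMM mu-del axioms}.

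First I would check that $\langle f,\under\rangle$ really lies in $G^*=\Hom(G_A,A_A)$. Since the right $A$-action on $G$ is $g\cdot a=gT(a)$, this amounts to $\langle f,gT(a)\rangle=\eps fgT(a)\eta=\eps fg\eta a=\langle f,g\rangle a$, which is exactly $T(a)\eta=\eta a$, i.e. (\ref{SMM2}). For non-degeneracy I would use that $G_A$ is free of rank one on $\mu$, so evaluation at $\mu$ is a bijection $G^*\iso A$. The composite $F\to G^*\iso A$ sends $f$ to $\langle f,\mu\rangle=\eps f\mu\eta$; writing $f=Q(c)\delta$ and using the intertwiner relation $\eps Q(c)=c\eps$ together with $\eps\delta=1$ (\ref{SMM13}) and $\mu\eta=1$ (\ref{SMM10}), this equals $c=\eps f$. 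As $a\mapsto Q(a)\delta$ is a bijection $A\iso F$ (injective since $\eps Q(a)\delta=a$, surjective by the definition of $F$) with inverse $f\mapsto\eps f$, the composite is a bijection, hence so is $F\to G^*$ and the pairing is non-degenerate.

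Two reconstruction identities drive the remaining calculations. For $g\in G=\mu T(A)$ one has $g=\mu T(g\eta)$ (write $g=\mu T(b)$, then $g\eta=\mu T(b)\eta=\mu\eta b=b$ by (\ref{SMM2}) and (\ref{SMM10})); for $f\in F=Q(A)\delta$ one has $f=Q(\eps f)\delta$. Moreover $1_G=1_F=1_A$, realized as $\mu T(\eta)$ by (\ref{SMM11}) and as $Q(\eps)\delta$ by (\ref{SMM14}); this makes the two unit identities immediate, since $\langle f,1_G\rangle=\eps f\eta=\pi(f)$ and $\langle 1_F,g\rangle=\eps g\eta=\pi(g)$, the latter being (\ref{F-uni}). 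For bimodule-linearity I would expand the left-hand sides and push $\eta$ to the right: in the first identity $\langle f,gS(a_1)T(a_2)\rangle$ becomes $\eps fgQ(a_1)\eta a_2$ via $T(a)\eta=\eta a$ and $S(a)\eta=Q(a)\eta$ (\ref{S eta}), and the crucial observation is that every $g\in G$ commutes with $Q(a_1)$ because $\mu Q=Q\mu$ (\ref{SMM7}) and $TQ=QT$ (\ref{SMM6}); this yields $\langle fQ(a_1),g\rangle a_2$. The second identity is dual: every $f\in F$ commutes with $T(a_2)$ because $\delta T=T\delta$ (\ref{SMM8}) and $QT=TQ$ (\ref{SMM6}), and one uses $\eps Q(a_1)=a_1\eps$ together with $\eps S=\eps T$ (\ref{eps S}).

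Finally I would treat the two (co)product identities. For the coproduct of $F$, with $\Delta_F(f)=Q(\eps f\mu)\delta\oA\delta$, I would compute $\langle f\twoB,g\rangle=\langle\delta,g\rangle=g\eta$, then form $f\oneB\cdot(g\eta)=S(g\eta)Q(\eps f\mu)\delta$, pair with $g'$, and reduce using the commutation relations and $\mu T(g\eta)=g$ to the required $\eps fgg'\eta=\langle f,gg'\rangle$. For the product of $F$, with $\Delta^G(g)=\mu\oA\mu T(\delta g\eta)$, I would compute $\langle f',g\oneT\rangle=\langle f',\mu\rangle=\eps f'$, form the left action $\eps f'\cdot g\twoT=\mu T(\delta g\eta)S(\eps f')$, pair with $f$, and simplify by moving the $Q$'s and $T$'s past $\mu$ and $\delta$ and invoking $f'=Q(\eps f')\delta$ to obtain $\eps ff'g\eta=\langle ff',g\rangle$, which is (\ref{F-mul}). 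The computations are routine once the commutation relations are in hand; the genuinely delicate point — and the step I expect to be the main obstacle — is getting the two bimodule-linearity identities exactly right, since they both fix the correct $A$-bimodule structures on $G$, $F$ and $G^*$ and guarantee that the Sweedler expressions appearing in the (co)product identities are well defined over $\oA$.
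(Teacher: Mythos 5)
Your proposal is correct, and there is nothing in the paper to compare it against: the lemma is stated without proof (the author evidently regards it as a routine verification, like the bialgebroid axioms in Theorem \ref{thm: G}), so your direct computation is precisely the missing argument. I checked the key steps and they all go through: $\bra f,\mu\ket=\eps f$ identifies $F\to G^*\iso A$ with the inverse of $a\mapsto Q(a)\delta$; the two bimodule identities follow from $\eta a=T(a)\eta$, $S(a)\eta=Q(a)\eta$, $\eps S(a)=\eps T(a)$, $a\eps=\eps Q(a)$ and the fact that $G=\mu T(A)$ commutes with $Q(A)$ while $F=Q(A)\delta$ commutes with $T(A)$ (via (\ref{SMM6})--(\ref{SMM8})); the coproduct identity reduces, after $\eps S(g\eta)=\eps T(g\eta)$ and pushing $T(g\eta)$ past $Q(\eps f\mu)$ and $\delta$, to $\eps f\mu T(g\eta)g'\eta=\eps fgg'\eta$ using $g=\mu T(g\eta)$; and the product identity reduces via $S(\eps f')\eta=Q(\eps f')\eta$, $\mu Q=Q\mu$ and $\mu\eta=1$ to $\eps fQ(\eps f')\delta\, g\eta=\eps ff'g\eta$ using $f'=Q(\eps f')\delta$. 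The only point I would ask you to spell out is non-degeneracy in the second variable, which your bijection $F\iso G^*$ does not by itself deliver: it follows in one line by pairing with $\delta\in F$, since $\bra\delta,g\ket=g\eta$ and $g=\mu T(g\eta)$ recovers $g$. Your closing remark about well-definedness over $\oA$ is also the right thing to flag; it is exactly the balancing condition encoded in the two bimodule-linearity identities together with $S$ being an anti-homomorphism.
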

\begin{cor}
The category $\A_Q$ of comodules of the SMM $\A$ is monoidally equivalent to the category $\Set_\reg ^G$ of source-regular 
right $G$-comodules.
\end{cor}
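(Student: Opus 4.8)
The plan is to obtain the Corollary by combining three results already in hand: the preceding Theorem, giving a monoidal equivalence $\A_Q^\op\cong\,_F^\reg\Set$ with the source-regular left $F$-modules; the preceding Lemma, identifying $F$ with the left dual $G^*=\Hom(G_A,A_A)$ through the nondegenerate pairing $\bra f,g\ket=\eps fg\eta$; and the observation after Theorem \ref{thm: G} that $G_A$ is free of rank $1$. The rank-$1$ freeness is the engine of the argument: it makes $G$ a \emph{finite} right $A$-bialgebroid, so that $G$ is reflexive ($G\cong F^*$) and $A$-linear duality exchanges modules and comodules.

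First I would construct a contravariant monoidal duality between $\,_F^\reg\Set$ and $\Set_\reg^G$, i.e. a covariant monoidal equivalence $(\,_F^\reg\Set)^\op\cong\Set_\reg^G$. On objects it sends a source-regular left $F$-module $M$ (so $\,_AM\cong{}_AA$ via $s_F=Q$) to its dual $M^\vee=\Hom(\,_AM,{}_AA)$, a right $A$-module that is again free of rank $1$, hence source-regular. Dualizing the $F$-action $F\otimes M\to M$ and using $G\cong F^*$ turns it into a map $M^\vee\to M^\vee\oA G$; that this lands in the uncompleted tensor $M^\vee\oA G$ is precisely where rank-$1$ freeness of $G_A$ is used, and coassociativity and counitality of the resulting right $G$-coaction are forced by the pairing relations \eqref{F-mul} and \eqref{F-uni}, which say that the multiplication and unit of $F$ are dual to the comultiplication and counit of $G$. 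Because $G_A$ is free of rank $1$ the double dual is canonically the identity, so $M\mapsto M^\vee$ is an equivalence with the analogous dual as inverse.

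Next I would check that this duality is strong monoidal, matching the $F$-module tensor product with the $G$-comodule tensor product (the $\oA$ carrying the diagonal coaction built from $\Delta^G$). The remaining pairing identities of the Lemma — those expressing $\bra\under,\under\ket$ through the source and target maps and through $\Delta^G$, $\epsilon^G$ — assert exactly that $\bra\under,\under\ket$ is a pairing \emph{of bialgebroids}, and from them the compatibility of $M\mapsto M^\vee$ with tensor products and units is a formal verification.

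Finally I would assemble the chain. Since the opposite of a strict monoidal category is strict monoidal with the same product, applying $(\under)^\op$ to the preceding Theorem and composing with the duality above yields
\[
\A_Q\ \cong\ (\A_Q^\op)^\op\ \cong\ (\,_F^\reg\Set)^\op\ \cong\ \Set_\reg^G
\]
as monoidal categories, which is the claim. I expect the main obstacle to be the second step: constructing the dualized coaction and proving the contravariant functor is a strong monoidal equivalence. All of the op-, left/right- and source/target-bookkeeping is concentrated there, and one must invoke the rank-$1$ freeness of $G_A$ repeatedly to guarantee that $A$-linear duals behave like finite duals — so that $\Hom(M,\,N\oA G)\cong\Hom(M,N)\oA G$ and the double-dual map is invertible.
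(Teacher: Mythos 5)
Your proposal follows essentially the same route as the paper: both pass through the preceding Theorem's equivalence $\A_Q^\op\cong\,_F^\reg\Set$ and then set up a contravariant duality $X\mapsto\Hom(\,_AX,\,_AA)$ under which, via the nondegenerate pairing $\bra f,g\ket=\eps fg\eta$ and the rank-$1$ freeness, left $F$-module structures correspond to right $G$-comodule structures and $F$-linear maps to $G$-colinear ones. The paper makes the dual concrete by first invoking the dualized Corollary \ref{cor: Set_G^reg} to write every source-regular left $F$-module as $Q(A)u$ for $u\in\ob\A_Q$, but this is a presentational difference rather than a different argument.
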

\begin{proof}
By dualizing Corollary \ref{cor: Set_G^reg} every source-regular left $F$-module $X\in\,_F^\reg \Set$ is isomorphic to $Q(A)u$
for some $u\in\ob\A_Q$. The right dual $^*X=\Hom(_AX,\,_AA)$ is therefore isomorphic to 
\[
^*[Q(A)u]=\Hom(Q(A)u,\,_AA)=\{\varphi:Q(A)u\to A\,|\, \exists a_0\in A,\ \varphi(x)=\eps x a_0\,\}
\]
and left $F$-module structures on $X$ and right $G$-comodule structures on $^*X$ are in bijection via
\[
\bra f\cdot x,\varphi\ket=\bra f,\bra x,\varphi\nulT\ket\cdot\varphi\oneT\ket,\qquad f\in F,\ x\in X,\varphi\in\,^*X\,.
\]
For source-regular $F$-modules $X$ and $Y$ and for a left $A$-module map $h:X\to Y$ its transpose $^*h:\,^*Y\to\,^*X$ is a right $A$-module map and
\begin{align*}
\bra f\cdot h(x),\varphi\ket&=\bra f,\bra x,\,^*h(\varphi\nulT)\ket\cdot\varphi\oneT\ket\\
\bra h(f\cdot x),\varphi\ket&=\bra f,\bra x,\,^*h(\varphi)\nulT\ket\cdot\,^*h(\varphi)\oneT\ket
\end{align*}
Therefore $h$ is left $F$-linear precisely when $^*h$ is right $G$-colinear.
\end{proof}

Together with $F$ and $G$  the monoid $A$ contains also an image of the smash product $G\mash F$.
Indeed, the axiom (\ref{SMM15}) implies that for all $g\in G$, $f\in F$
\begin{align*}
fg&=Q(\eps f)\delta\mu T(g\eta)=Q(\eps f \mu)\mu\delta T(\delta g\eta)=\mu T(\delta g \eta) Q(\eps f\mu)\delta=\\
&=g\twoT S(\bra f\twoB,g\oneT\ket) f\oneB =(f\twoB\rightharpoonup g) f\oneB
\end{align*}
where $f\rightharpoonup g$ is the action that makes $G$ a left module algebra over the cooposite (left bialgebroid) of $F$.
The invariant submonoid is
\[
G^F=\{g\in G\,|\,f\rightharpoonup g=\pi(f)\cdot g\}\ =\ T(A)\,.
\]
The smash product $G\mash F=G\am{S(A)}F$ is the tensor product over the common submonoid $S(A)=F\cap G$ with
multiplication rule
\[
(g\mash f)(g'\mash f')=g(f\twoB\rightharpoonup g')\mash f\oneB f' =g{g'}\twoT S(\bra f\twoB,{g'}\oneT\ket)\mash f\oneB f'\,.
\]

There is a remarkable connection between grouplike elements of $G$ and the objects of $\A_Q$. Recall that in a bialgebroid $G$ an element $g\in G$ is called grouplike if $\Delta^G(g)=g\oA g$ and $\epsilon^G(g)=1$. 

\begin{lem}
The bijection $A\iso G$, $a\mapsto \mu T(a)$, restricts to a bijection $A_Q\iso \Gr(G)$ between the set of objects $u\in\A_Q$
and the set of grouplike elements of the bialgebroid $G$. This bijection lifts to an isomorphism of monoids:
$A_Q\iso \Gr(G)^\op$.
Dually, the map $x\mapsto Q(x)\delta$ defines an isomorphism of monoids $A^T\iso\Gr(F)^\op$ from the monoid of objects of $\A^T$ to the opposite of the monoid of grouplikes in $F$.
\end{lem}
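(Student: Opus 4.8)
The plan is to make both monoid structures explicit and then check that the stated bijection preserves counit, comultiplication and (anti)multiplication. On the comodule side the objects of $\A_Q$ are the $u\in A$ with $Q(u)u=\delta u$ and $\eps u=1$, and they form a monoid under $\votobj$ with $u\votobj v=\mu T(v)u$; its unit is $\eta$ (indeed $\eta$ is an object by (\ref{SMM17}), (\ref{SMM18}), and $u\votobj\eta=\mu T(\eta)u=u$ by (\ref{SMM11}), while $\eta\votobj v=\mu T(v)\eta=\mu\eta v=v$ by (\ref{SMM2}), (\ref{SMM10})). On the algebroid side $\Gr(G)$ sits inside $G$ under multiplication. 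Since $G_A$ is free of rank one on $\mu$, the assignment $a\mapsto\mu T(a)$ is already a bijection $A\iso G$, so it remains to match grouplikes with $\A_Q$-objects and to check anti-multiplicativity.

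First I would evaluate the structure maps of $G$ on $g=\mu T(a)$. Using $T(a)\eta=\eta a$ and $\mu\eta=1$ (that is, (\ref{SMM2}), (\ref{SMM10})) one gets $\delta g\eta=\delta a$, hence $\epsilon^G(g)=\eps a$ and $\Delta^G(g)=\mu\oA\mu T(\delta a)$. Thus the counit condition $\epsilon^G(g)=1$ is exactly $\eps a=1$, the second defining equation of an $\A_Q$-object.

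Next comes the comultiplication condition, which is the one genuinely non-formal step. I would rewrite $g\oA g=\mu T(a)\oA\mu T(a)$ via the balancing $\mu T(a)\oA g'=\mu\oA g'S(a)$ coming from the bimodule structure $a_1\cdot g\cdot a_2=gS(a_1)T(a_2)$, obtaining $g\oA g=\mu\oA\mu T(a)S(a)$. Because $T(A)$ and $S(A)$ commute (property (iii)) and $\mu S(a)=\mu TQ(a)$ by (\ref{mu S}), the right factor becomes $\mu T(a)S(a)=\mu S(a)T(a)=\mu TQ(a)T(a)=\mu T(Q(a)a)$. Since $G\oA G\cong G$ is free of rank one through $h\mapsto\mu\oA h$, comparing $\Delta^G(g)=\mu\oA\mu T(\delta a)$ with $g\oA g=\mu\oA\mu T(Q(a)a)$ reduces the identity $\Delta^G(g)=g\oA g$ to $\mu T(\delta a)=\mu T(Q(a)a)$, i.e.\ to $\delta a=Q(a)a$ by injectivity of $a\mapsto\mu T(a)$. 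Hence $g$ is grouplike precisely when $a$ satisfies the two defining equations of an object of $\A_Q$, which establishes the bijection $A_Q\iso\Gr(G)$.

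Finally, writing $\Phi(u)=\mu T(u)$, I would compute $\Phi(u\votobj v)=\mu T(\mu T(v)u)=\mu T(\mu)T^2(v)T(u)$ and simplify the front with $\mu T(\mu)=\mu^2$ and $\mu T^2(v)=T(v)\mu$ ((\ref{SMM9}), (\ref{SMM1})) to get $\mu T(v)\mu T(u)=\Phi(v)\Phi(u)$; together with $\Phi(\eta)=\mu T(\eta)=1$ from (\ref{SMM11}) this exhibits $\Phi$ as a monoid isomorphism $A_Q\iso\Gr(G)^{\op}$ (and shows en passant that $\Gr(G)$ is closed under multiplication). The dual statement for $F$ follows by the identical argument with $T,\mu$ replaced by $Q,\delta$: one checks $\epsilon_F(Q(x)\delta)=x\eta$ and $\Delta_F(Q(x)\delta)=Q(x\mu)\delta\oA\delta$ against $Q(x)\delta\oA Q(x)\delta$, reducing the grouplike conditions to $x\eta=1$ and $x\mu=xT(x)$ (the defining equations of $\A^T$-objects), and verifies anti-multiplicativity of $x\mapsto Q(x)\delta$ using (\ref{SMM12}), the intertwiner $Q^2(a)\delta=\delta Q(a)$, and (\ref{SMM14}); alternatively it is immediate from the duality $\Comod\A=\Mod\A^{\op,\rev}$.
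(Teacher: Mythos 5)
Your proof is correct, and it follows the only natural route: the paper in fact states this lemma without proof, so a direct verification like yours is exactly what is needed. All the key steps check out --- the reduction of $\epsilon^G(\mu T(a))=1$ to $\eps a=1$, the balancing computation $\mu T(a)\oA\mu T(a)=\mu\oA\mu T(Q(a)a)$ combined with the rank-one freeness of $G_A$ to reduce $\Delta^G(g)=g\oA g$ to $Q(a)a=\delta a$, and the anti-multiplicativity $\mu T(\mu T(v)u)=\mu T(v)\mu T(u)$ via (\ref{SMM9}) and (\ref{SMM1}) --- and the dual argument for $F$ is handled correctly as well.
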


We do not know other objects than the tensor powers of $\mu$ in $\A^T$ and those of $\delta$ in $\A_Q$.
The known part of $\A^T$, for example, contains the simplicial object
$$
\begin{picture}(300,100)
\put(0,47){$\eps$}
\put(67,50){\vector(-1,0){50}} \put(41,53){$\sst\eps$}

\put(75,47){$\mu$}
\put(143,63){\vector(-1,0){50}} \put(112,66){$\sst Q(\eps)$}
\put(93,50){\vector(1,0){50}} \put(115,53){$\sst\delta$}
\put(143,37){\vector(-1,0){50}} \put(115,40){$\sst\eps$}

\put(153,47){$\mu\hotobj\mu$}
\put(230,76){\vector(-1,0){50}} \put(197,79){$\sst Q^2(\eps)$}
\put(180,63){\vector(1,0){50}} \put(197,66){$\sst Q(\delta)$}
\put(230,50){\vector(-1,0){50}} \put(197,53){$\sst Q(\eps)$}
\put(180,37){\vector(1,0){50}} \put(202,40){$\sst\delta$}
\put(230,24){\vector(-1,0){50}} \put(202,27){$\sst\eps$}

\put(242,47){$\mu\hotobj\mu\hotobj\mu$}

\put(292,50){$\dots$}
\end{picture}
$$
associated to the comonad $\under\hot 1_\mu$ but it contains also endoarrows $g\in G=\A^T(\mu,\mu)$ at all occurence of the object $\mu$. E.g., there are arrows $\mu:\mu\to\mu$, $\mu\hot 1_\mu:\mu\hotobj\mu\to\mu\hotobj\mu$, \dots which do not
belong to the above simplicial object. There is also another simplicial object $\Delta^\op\to\A^T$ associated to the comonad
$1_\mu\hot\under$. Dually, the comodule category $\A_Q$ contains cosimplicial objects and endoarrows $f\in F$ of the cogenerator object $\delta$. The conjecture is that the category generated by these objects and arrows exhausts all of $\A^T$ and $\A_Q$, respectively, when $\A$ is the free SMM.

\section{Skew monoidal monoids are source regular bialgebroids}

In the previous sections we have seen that every SMM $\A$ determines a dual pair of source regular bialgebroids $F$ and $G$.
In this section we show that every source regular bialgebroid determines a skew monoidal structure on its base monoid
and these two constructions are inverses of each other, up to isomorphisms.

First we need a precise notion of isomorphism of SMM-s.
Consider a skew monoidal functor $\bra A, T,Q,\gamma,\eta,\eps\ket\rarr{\Phi}\bra A', T',Q',\gamma',\eta',\eps'\ket$. Such a functor consists of a monoid morphism $\phi:A\to A'$ and elements $\phi_2,\phi_0\in A'$ 
satisfying the intertwiner relations (expressing naturality of $\phi_2$)
\begin{equation} \label{SMM iso1}
\phi_2 Q'\phi(a)\ =\ \phi Q(a)\phi_2\quad\text{and}\quad \phi_2 T'\phi(a)\ =\ \phi T(a)\phi_2
\end{equation}
and the identities (being the 3 skew monoidal functor axioms)
\begin{align}
\label{SMM iso2}
\phi(\gamma)\phi_2 T'(\phi_2)&=\phi_2 Q'(\phi_2)\gamma'\\
\label{SMM iso3}
\phi_2 Q'(\phi_0)\eta'&=\phi(\eta)\\
\label{SMM iso4}
\phi(\eps)\phi_2 T'(\phi_0)&=\eps'
\end{align}

\begin{defi} \label{def: SMM iso}
An isomorphism $\Phi:\A\iso\A'$ of skew monoidal monoids is a skew monoidal functor $\bra\phi,\phi_2,\phi_0\ket:\A\to\A'$ 
such that $\phi:A\to A'$ is an isomorphism of monoids and the elements $\phi_2$ and $\phi_0$ of $A'$ are invertible.
\end{defi}

In particular, an isomorphism of SMM-s is both a skew monoidal and a skew opmonoidal functor. Therefore it determines
not only a monad morphism
\begin{align}
\sigma&:=\phi_2 Q'(\phi_0)\ :\ T'\phi\to \phi T\\
\sigma\mu'&=\phi(\mu)\sigma T'(\sigma)\\
\sigma\eta'&=\phi(\eta)
\end{align}
but a comonad morphism
\begin{align}
\tau&:=T'(\phi_0^{-1})\phi_2^{-1}\ :\ \phi Q\to Q'\phi\\
\delta'\tau&=Q'(\tau)\tau\phi(\delta)\\
\eps'\tau&=\phi(\eps)
\end{align}
as well. 
\begin{pro} \label{pro: arrow map}
Let $\A$ and $\A'$ be skew monoidal monoids and let $G$, resp. $G'$, denote the right $A$-bialgebroid associated to $\A$ 
by Theorem \ref{thm: G} and the right $A'$-bialgebroid associated to $\A'$. 
If $\Phi:\A\to\A'$ is an isomorphism of skew monoidal monoids then the pair $\bra \varphi,\varphi_0\ket$, where
\begin{align*}
\varphi:G&\to G'\,\qquad g\mapsto \tau\phi(g)\tau^{-1}\\ 
\varphi_0:A&\to A'\,,\qquad a\mapsto \phi_0^{-1}\phi(a)\phi_0
\end{align*}
is an isomorphism of bialgebroids from $G$ to $G'$, i.e., the following identities hold for all $a\in A$ and $g\in G$:
\begin{align}
\label{bgd-mor 1}
\varphi (T(a))&=T'(\varphi_0(a))\\
\label{bgd-mor 2}
\varphi (S(a))&=S'(\varphi_0(a))\\
\label{bgd-mor 3}
\epsilon^{G'}(\varphi(g))&=\varphi_0(\epsilon^G(g))\\
\label{bgd-mor 4}
\varphi(g)\oneT\am{A'}\varphi(g)\twoT&=\varphi(g\oneT)\am{A'}\varphi(g\twoT)\,.
\end{align}
\end{pro}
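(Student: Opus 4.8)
The plan is to extend $\varphi$ to the monoid isomorphism $\tilde\varphi:A\to A'$, $\tilde\varphi(x):=\tau\phi(x)\tau^{-1}$ (conjugation by the invertible element $\tau$ followed by the monoid isomorphism $\phi$), and to read off its behaviour on the generators of the bialgebroid structure. From the comonad-morphism intertwiner $\tau\phi(Q(a))=Q'(\phi(a))\tau$ one gets $\tilde\varphi(Q(a))=Q'(\phi(a))$, and from $\delta'\tau=Q'(\tau)\tau\phi(\delta)$ one gets $\tilde\varphi(\delta)=Q'(\tau)^{-1}\delta'$. Using the naturality relations (\ref{SMM iso1}) together with the commutation $T'(x)Q'(y)=Q'(y)T'(x)$ of (\ref{SMM6}), the monad-morphism intertwiner yields $\phi(T(a))=\sigma T'(\phi(a))\sigma^{-1}$, whence, since $\tau\sigma=T'(\phi_0^{-1})Q'(\phi_0)$ and $Q'(\phi_0)$ commutes with $T'(\phi(a))$, one obtains $\tilde\varphi(T(a))=(\tau\sigma)T'(\phi(a))(\tau\sigma)^{-1}=T'(\varphi_0(a))$. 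I would also keep at hand $\eps'\tau=\phi(\eps)$ and $\sigma\eta'=\phi(\eta)$. Well-definedness (that $\tilde\varphi$ carries $G=\mu T(A)$ into $G'=\mu'T'(A')$) and bijectivity of $\varphi$ I would deduce formally, since the inverse skew monoidal isomorphism $\Phi^{-1}$ produces the two-sided inverse of the pair $\langle\varphi,\varphi_0\rangle$.

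Given these normal forms, (\ref{bgd-mor 1}) is precisely the identity $\tilde\varphi(T(a))=T'(\varphi_0(a))$ derived above. For (\ref{bgd-mor 2}) I would start from the second expression $S(a)=Q(\eps T(a))\delta$ in (\ref{S}) and compute $\tilde\varphi(S(a))=\tilde\varphi(Q(\eps T(a)))\,\tilde\varphi(\delta)=Q'(\phi(\eps)\phi(T(a)))\,Q'(\tau)^{-1}\delta'$. Substituting $\phi(\eps)=\eps'\tau$, $\phi(T(a))=\sigma T'(\phi(a))\sigma^{-1}$ and $\tau\sigma=T'(\phi_0^{-1})Q'(\phi_0)$, and then repeatedly applying the $T'$--$Q'$ commutation of (\ref{SMM6}) to cancel the $Q'(\phi_0)$, $\phi_2$ and $\tau$ factors, collapses the argument of the outer $Q'$ exactly to $\eps'T'(\varphi_0(a))$, so that $\tilde\varphi(S(a))=Q'(\eps'T'(\varphi_0(a)))\delta'=S'(\varphi_0(a))$. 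This is short and uses only the intertwiners and commutation, not the monad relation.

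Relation (\ref{bgd-mor 3}) is the counit compatibility $\eps'\tilde\varphi(g)\eta'=\varphi_0(\eps g\eta)$. I would insert $\eps'\tau=\phi(\eps)$ on the left and $\eta'=\sigma^{-1}\phi(\eta)$ on the right, rewriting $\eps'\tilde\varphi(g)\eta'=\eps'\tau\phi(g)\tau^{-1}\eta'$ as $\phi(\eps g)\,(\sigma\tau)^{-1}\phi(\eta)$; then, expanding $(\sigma\tau)^{-1}\phi(\eta)$ with $\phi(\eta)=\phi_2 Q'(\phi_0)\eta'$, the commutation (\ref{SMM6}) and the unit relations (\ref{SMM2}), (\ref{SMM7}), (\ref{SMM10}), I would cancel the conjugating factors produced by $\sigma$ and $\tau$ and match the result with $\varphi_0(\eps g\eta)=\phi_0^{-1}\phi(\eps g)\phi(\eta)\phi_0$. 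The hypothesis $g\in G=\mu T(A)$ is essential here: it is what lets $\mu$ commute past $Q'$-factors via (\ref{SMM7}) (concretely, the same mechanism by which $\eps=\eps\mu\eta$ and (\ref{SMM7}) force $\phi_0$ to fix $\eps$), which is what allows the $\phi_0$-conjugation to close up.

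The genuinely laborious step is the comultiplication identity (\ref{bgd-mor 4}). Since $s^G=T$ and $t^G=S$ are respected by the first two parts, $\tilde\varphi\otimes\tilde\varphi$ is a well-defined map $G\oA G\to G'\am{A'}G'$, compatible with the $A$- and $A'$-balancings through $\varphi_0$; so it suffices to compare $\Delta^{G'}(\tilde\varphi(g))=\mu'\am{A'}\mu'T'(\delta'\tilde\varphi(g)\eta')$ with the image of $\Delta^G(g)=\mu\oA\mu T(\delta g\eta)$. The key simplification is that $G_A$ is free of rank one on $\mu$, so writing $\tilde\varphi(\mu)=\mu'T'(c)$ and using the balancing $\mu'T'(c)\am{A'}h=\mu'\am{A'}hS'(c)$ normalizes the left leg to $\mu'$ in both expressions and reduces the identity to a single equation between the right legs in $G'$. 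I expect this reduction, together with the ensuing verification via $\tilde\varphi(\delta)=Q'(\tau)^{-1}\delta'$, $\eps'\tau=\phi(\eps)$, $\sigma\eta'=\phi(\eta)$ and the monad relation $\sigma\mu'=\phi(\mu)\sigma T'(\sigma)$ (which is what controls $c$ through $\tilde\varphi(\mu)$), to be the main obstacle, precisely because one must track the non-trivial difference between $\tilde\varphi(\mu)$ and $\mu'$ as it is transported across the balanced tensor product.
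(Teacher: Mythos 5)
Your proposal is correct and follows essentially the same route as the paper, whose own proof is only a sketch asserting that the verification is straightforward from the (co)monad morphism relations together with the fact that $\phi(g)$ commutes with elements of the form $\phi_2 Q'(x)\phi_2^{-1}$ when $g\in G$ --- exactly the two ingredients you isolate (your computations of $\varphi(T(a))$, $\varphi(S(a))$ and $\epsilon^{G'}(\varphi(g))$ check out, and your freeness-of-$G_A$ normalization is a sensible way to organize the coproduct identity, which the paper leaves entirely to the reader). The only stylistic difference is well-definedness of $\varphi$: the paper observes that $\sigma^{-1}\phi(g)\sigma$ is a composite of arrows $\mu'\to\phi(\mu)\sigma\to\phi(\mu)\sigma\to\mu'$ in ${\A'}^{T'}$ and that the discrepancy with $\tau$ is an inner automorphism of $G'$ induced by $T'(\phi_0)$, whereas you would obtain it computationally from $\varphi(\mu)=\mu'T'(c)$ and $\varphi(T(a))=T'(\varphi_0(a))$.
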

\begin{proof}
Notice that for $g\in G$ the expression $\sigma^{-1}\phi(g)\sigma$ is a well-defined composition of arrows $\mu'\to \phi(\mu)\sigma\to\phi(\mu)\sigma\to\mu'$ in ${\A'}^{T'}$, therefore defines an element in $G'$. But $\varphi$ was defined
using $\tau$ instead of $\sigma^{-1}$. Fortunately, the difference is only an inner automorphism of $G'$, induced by $T'(\phi_0)$, 
so $\varphi:G\to G'$ is well-defined.

The verification of the relations (\ref{bgd-mor 1} - \ref{bgd-mor 4}) requires too much place to account for in full detail.
But it is straightforward using the (co)monad morphism relations and, occasionaly, the fact that since $\phi$ is an isomorphism, 
expressions like $\phi_2 Q'(\phi_0)\phi_2^{-1}$ commute with $\phi(g)$ for $g\in G$. 
\end{proof}

\begin{thm}
Let $A$ be a monoid. 
Then there is a bijection between isomorphism classes of
skew monoidal monoids $\bra A, T,Q,\mu,\eta,\delta,\eps\ket$ with underlying monoid $A$
and source regular right bialgebroids $\bra G,A,s^G,t^G,\cop^G,\epsilon^G\ket$ over $A$.
\end{thm}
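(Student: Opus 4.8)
The plan is to invert the forward assignment of Theorem \ref{thm: G}, which sends a skew monoidal monoid $\A$ to the right $A$-bialgebroid $G=\A^T(\mu,\mu)$. By Proposition \ref{pro: arrow map} this forward map already respects isomorphisms, so it descends to isomorphism classes; what remains is to construct a backward map $G\mapsto\A$ and to check the two round trips. The key structural input is that source regularity gives, after choosing a free generator $e$ of the right $A$-module $G_A$, a bijection $\theta:A\iso G$, $\theta(a)=e\,s^G(a)$, which I use to transport the structure of $G$ onto the set $A$ while retaining the given multiplication of $A$.

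Explicitly, I would define the skew monoidal data by
\begin{align*}
\eta&:=\theta^{-1}(1_G), & \eps&:=\epsilon^G(e), & \mu&:=\theta^{-1}(e\,e),\\
T(a)&:=\theta^{-1}(s^G(a)\,e), & Q(a)&:=\theta^{-1}(e\,t^G(a)), & \delta&:=\theta^{-1}(h),
\end{align*}
where all products in the arguments of $\theta^{-1}$ are taken in $G$, and in the last formula $\Delta^G(e)=e\oA h$ with $h$ unique because $G\oA G\cong G$ once $G_A$ is free of rank $1$ (the left leg always reduces to $e$). The endomorphism $T$ is the transported right translation by $e$ along $s^G$, while $Q$ encodes the \emph{left} $A$-module structure of $G$ (the nontrivial one): transporting $a\cdot g=g\,t^G(a)$ through $\theta$ yields precisely left multiplication by $Q(a)$, and evaluating at the generator gives the displayed formula. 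That $T,Q$ are monoid endomorphisms, and that the transported target is an anti-endomorphism $S$, is part of the verification below and is exactly where source regularity is used.

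Next I would check that $\bra A,T,Q,\mu,\eta,\delta,\eps\ket$ satisfies the eighteen relations of Proposition \ref{pro: SMM mu-del axioms}. The intertwiner and commutation relations (\ref{SMM1}--\ref{SMM8}) come from the source/target axioms of $G$ together with $\epsilon^G\circ s^G=\id_A$; the counital relations (\ref{SMM10}), (\ref{SMM11}), (\ref{SMM13}), (\ref{SMM14}), (\ref{SMM18}) from the counit axioms; and the bimonoid-like relations (\ref{SMM9}), (\ref{SMM12}), (\ref{SMM16}), (\ref{SMM17}) from coassociativity of $\Delta^G$ and its being an algebra map, read off after the identification $G\oA G\cong G$. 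For the round trips: running forward then backward with the canonical generator $e=\mu$ gives $\theta(a)=\mu T(a)$, and the six formulas return the original data on the nose (e.g. $\theta^{-1}(\mu\mu)=\theta^{-1}(\mu T(\mu))=\mu$ by (\ref{SMM9}), and $\theta^{-1}(s^G(a)\mu)=\theta^{-1}(\mu T^2(a))=T(a)$ by (\ref{SMM1})); running backward then forward reproduces $s^G,t^G,\Delta^G,\epsilon^G$ by construction, so $G'\cong G$. The only genuine freedom is the generator: any two differ by $e'=e\,s^G(u)$ for a unit $u\in A$, and one checks directly that the induced change of data is an isomorphism $\bra\id_A,\phi_2,\phi_0\ket$ in the sense of Definition \ref{def: SMM iso}, so the backward map is well defined on isomorphism classes.

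The main obstacle is the middle step, and within it the \emph{mixed} axioms (\ref{SMM5}) and especially the distributive-law/pentagon relation (\ref{SMM15}), $Q(\mu)\mu\,\delta T(\delta)=\delta\,\mu$. These couple $\mu$ with $\delta$, hence couple the monad and comonad sides, and their verification needs the full strength of coassociativity of $\Delta^G$ and the freeness isomorphism $G\oA G\cong G$, rather than merely the (anti)homomorphism properties of $s^G$ and $t^G$; establishing that $T$ and $Q$ are genuine monoid endomorphisms of the \emph{given} monoid $A$ is the same difficulty in disguise, since it is here that the interaction between the product of $A$ and the transported product of $G$ must be controlled.
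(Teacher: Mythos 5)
Your backward construction is, up to presentation, the same as the paper's: both directions rest on the observation that source regularity makes $\theta:a\mapsto e\,s^G(a)$ an isomorphism of right $A$-modules $A_A\iso G_A$, and all six of your transported data agree with the paper's (the paper packages $\theta$ as an adjunction $s^G\dashv s_G$ in $\Mon$ and routes the comonad side through the left dual $F=\Hom(G_A,A_A)$; your $T$, $Q$, $\delta$, $\eps$ are exactly its $s_Gs^G$, $s^Fs_F$, $s^F(\delta)$, $J(1_F)$, and with the canonical generator $e=\mu$ one indeed recovers $\theta(a)=\mu T(a)$ and $h=\mu T(\delta)$). Your treatment of the generator ambiguity and of the two round trips also matches the paper's.

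The problem is that the proposal stops exactly where the paper's proof starts doing real work. You correctly single out (\ref{SMM5}) and (\ref{SMM15}) as the crux, but you leave them unproved, asserting only that they ``need the full strength of coassociativity and freeness''; that is a genuine gap, not a detail. These identities are not formal consequences of transport along $\theta$, because $\theta$ respects only the right $A$-action while $\cop^G$ lands in $G\oA G$, whose left leg carries the nontrivial action $a\cdot g=g\,t^G(a)$; some calculus for moving target maps and coproduct legs past the identification is indispensable. The paper supplies it via the canonical pairing $\bra f,g\ket$ between $F$ and $G$: it first derives the auxiliary identities (\ref{36})--(\ref{39}) (the analogues of (\ref{mu S})--(\ref{eps S})) together with (\ref{13}), (\ref{31}) and the coproduct formula $\cop^G(\mu)=\mu\oA\mu T(\delta)$, and only then obtains (\ref{SMM5}) and the remaining bimonoid relations by evaluating against arbitrary elements of $F$ using (\ref{F-mul}). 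To complete your argument you would need an equivalent device. By contrast, the part you flag as ``the same difficulty in disguise'' is actually routine: left multiplication by $g\in G$ and right multiplication by $t^G(a)$ are right $A$-module endomorphisms of the free rank-one module $G_A$, hence transport along $\theta$ to left multiplications by $\theta^{-1}(ge)$ and by $Q(a)$ respectively, which immediately makes $g\mapsto\theta^{-1}(ge)$ a monoid morphism and $T$, $Q$ monoid endomorphisms.
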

\begin{proof}
The construction of $G$ from a SMM $\A$ given in Theorem \ref{thm: G} serves as the object map of a functor from the category of SMM-s, with arrows  being the isomorphisms of Definition \ref{def: SMM iso}, into the category of source regular bialgebroids. The arrow map is provided by Proposition \ref{pro: arrow map}. 

Next we construct a map from source regular bialgebroids to SMM-s. Source regularity is precisely the statement that the source map $s^G:A\to G$, as a 1-cell in $\Mon$, has a right adjoint. We choose a right adjoint $s_G:G\to A$ with unit $\eta:\id_A\to s_G s^G$ and counit $\mu:s^Gs_G\to\id_G$. 
Then both $s_G$ and $s^G$ are injective and $a\mapsto \mu s^G(a)$ is a bijection $A\to G$ with inverse $g\mapsto s_G(g)\eta$.
From the adjunction relations it follows that $T:=s_Gs^G$ is a monad on $A$ with multiplication $s_G(\mu)$ and
unit $\eta$. This proves 5 of the 18 SMM axioms of Proposition \ref{pro: SMM mu-del axioms}, namely (\ref{SMM1}), (\ref{SMM2}),
(\ref{SMM9}),  (\ref{SMM10}) and (\ref{SMM11}).

Let $F:=\Hom(G_A,A_A)$ with $\bra f,g\ket$ denoting evaluation of $f\in F$ on $g\in G$.
Then $F$ is a left $A$-module via $\bra a\cdot f,g\ket=a\bra f, g\ket$ and every $f\in F$ is uniquely determined by its value on $\mu$ since $\bra f,\mu s^G(a)\ket=\bra f,\mu\ket a$. This defines a bijection $J:F\to A$, $J(f):=\bra f,\mu\ket$. Using the comonoid structure of $G$ we make $F$ into a monoid by (\ref{F-mul}) and (\ref{F-uni}).
Then defining
\begin{equation}\label{eps-del-sF}
\eps:=J(1_F),\quad\delta:= J^{-1}(1_A),\quad s_F(a):=J^{-1}(a\eps),\quad s^F(f):=J(\delta f)
\end{equation}
we obtain that  $J(s_F(a))=a\eps=J(a\cdot 1_F)$, hence $s_F(a)=a\cdot 1_F$ and
\begin{align}
\bra fs_F(a),g\ket&=\bra f,\bra s_F(a),g\oneT\ket\cdot g\twoT\ket=\bra f,a\cdot g\ket=\notag\\
\label{13}&=\bra f,gt^G(a)\ket\,.
\end{align}
We deduce that
$s_F:A\to F$ and $s^F:F\to A$ are monoid morphisms and $s^F$ is left adjoint to the source map $s_F$ with
unit $\delta$ and counit $\eps$. It follows that $Q:=s^F s_F$ is a comonad on $A$ with comultiplication $s^F(\delta)$ and counit $\eps$. This proves 5 more SMM axioms.

The canonical pairing can now be written as 
\begin{equation}\label{26}
\bra f,g\ket\ =\ \eps s^F(f) s_G(g)\eta\,.
\end{equation}
The target map of the dual (would-be-)bialgebroid $F$ can be introduced by 
\begin{equation}\label{27}
\bra t_F(a),g\ket\ :=\ \bra 1_F,s^G(a)g\ket
\end{equation}
and proving
\begin{align} 
\notag\bra ft_F(a),g\ket&=\bra f,\bra t_F(a),g\oneT\ket\cdot g\twoT\ket=\bra f,\eps^G(s^G(a)g\oneT)\cdot g\twoT\ket\\
\notag&=\bra f,\eps^G(g\oneT)\cdot t^G(a)g\twoT\ket=\\
\label{29}&=\bra f,t^G(a)g\ket
\end{align}
it follows that it is a monoid morphism $t_F:A^\op\to F$ and $t_F(a)$ commutes with $s_F(b)$ for all $a,b\in A$. What used to be the antiendomorphism $S$ in previous Sections can now be written as
\begin{equation}\label{30}
s^Ft_F(a)=\bra\delta t_F(a),\mu\ket=\bra \delta,t^G(a)\mu\ket=s_Gt^G(a)\,.
\end{equation}
There is one more property of the pairing that we need:
\begin{align}\notag
\bra t_F(a)f,g\ket&=\bra t_F(a),\bra f,g\oneT\ket\cdot g\twoT\ket\eqby{27}\bra 1_F,\bra f,g\oneT\ket\cdot s^G(a)g\twoT\ket=\\
\notag&=\bra 1_F,\bra f,(s^G(a)g)\oneT\ket\cdot (s^G(a)g)\twoT\ket=\\
\label{31}&=\bra f,s^G(a),g\ket\,.
\end{align}
The basic commutation relations of a SMM structure can now be proven as follows. Since
\[
s^Fs_F(a)=\bra\delta s_F(a),\mu\ket\eqby{13}\bra\delta,\mu t^G(a)\ket\eqby{26}s_G(\mu)s_Gt^G(a)\eta\,,
\]
we can conclude that 
\begin{align}
\notag s^Fs_F(a)s_G(g)&=s_G(\mu)s_Gt^G(a)s_Gs^Gs_G(g)\eta=s_G(\mu)s_Gs^Gs_G(g)s_Gt^G(a)\eta=\\
\label{33}&=s_G(g)s^Fs_F(a)\,.
\end{align}
Similarly, from the equality
\[
s_Gs^G(a)=\bra \delta,s^G(a)\mu\ket\eqby{31}\bra t_F(a)\delta,\mu\ket=\eps s^Ft_F(a)s^F(\delta)
\]
we infer
\begin{align}
\notag s^F(f)s_Gs^G(a)&=\eps s^Fs_Fs^F(f) s^Ft_F(a)s^F(\delta)=\eps s^Ft_F(a)s^Fs_Fs^F(f) s^F(\delta)=\\
\label{35}&=s_Gs^G(a) s^F(f)\,.
\end{align}
Equations (\ref{33}) and (\ref{35}) prove, redundantly, the 3 SMM axioms (\ref{SMM6}), (\ref{SMM7}) and (\ref{SMM8}).

It remains to show the intertwiner relation (\ref{SMM5}) for $\gamma=s_G(\mu)s^F(\delta)$ and the last 4 bimonoid axioms.
For that first one proves the relations analogous to (\ref{mu S}), (\ref{S delta}), (\ref{S eta}) and (\ref{eps S}), namely
\begin{align}
\label{36}
s^Ft_F(a)\eta&=s^Fs_F(a)\eta\\
\label{37}
\mu t^G(a)&=\mu s^Gs^Fs_F(a)\\
\label{38}
\eps s_Gt^G(a)&=\eps s_Gs^G(a)\\
\label{39}
t_F(a)\delta&=s_Fs_Gs^G(a)\delta\,.
\end{align}
Then 
\begin{gather*}
s_G(\mu)s^F(\delta)\, s_Gs^Gs^Fs_F(a)\eqby{35}s_G(\mu)s_Gs^Gs^Fs_F(a)s^F(\delta)\eqby{37}\\
=s_G(\mu)s_Gt^G(a)s^F(\delta)\eqby{30}s_G(\mu)s^Ft_F(a)s^F(\delta)\eqby{39}s_G(\mu)s^Fs_Fs_Gs^G(a)s^F(\delta)\eqby{33}\\
=s^Fs_Fs_Gs^G(a)\, s_G(\mu)s^F(\delta)
\end{gather*}
which is precisely the intertwiner relation (\ref{SMM5}). Finally, the remaining bimonoid relations can be shown by the calculations
\begin{align*}
\eps\eta&\eqby{26}\bra 1_F,1_G\ket=\eps^G(1_G)=1_A\\
\eps s_G(\mu)&\eqby{26}\bra 1_F,\mu^2\ket=\epsilon^G(\mu^2)=\epsilon^G(s^G(\epsilon^G(\mu))\mu)\eqby{26}
\bra 1_F,s^G(\eps)\mu\ket=\\
&\eqby{31} \bra t_F(\eps),\mu\ket\eqby{26}\eps s^Ft_F(\eps)\eqby{30}\eps s_Gt^G(\eps)\eqby{38}\eps s_Gs^G(\eps)\\
s^F(\delta)\eta&\eqby{26}\bra\delta^2,1_G\ket=\bra\delta,\bra\delta,1_G\ket\cdot 1_G\ket\eqby{26}\bra\delta,t^G(\eta)\ket=\\
&\eqby{26}s_Gt^G(\eta)\eta\eqby{30}s^Ft_F(\eta)\eta\eqby{36}s^Fs_F(\eta)\eta\\
s^F(\delta)s_G(\mu)&\eqby{26}\bra\delta^2,\mu^2\ket=\bra\delta,\bra\delta,\mu\oneT\mu^{(1')}\ket\cdot \mu\twoT\mu^{(2')}\ket=\\
&=\bra\delta,\bra\delta,\mu^2\ket\cdot(\mu s^Gs^F(\delta))^2\ket=\bra\delta,s_G(\mu)\cdot \mu^2 s^Gs_Gs^Gs^F(\delta)
s^Gs^F(\delta)\ket=\\
&=\bra\delta,s_G(\mu)\cdot\mu^2\ket s_Gs^Gs^F(\delta)s^F(\delta)=\bra\delta,\mu^2 t^Gs_G(\mu)\ket s_Gs^Gs^F(\delta)s^F(\delta)=\\
&\eqby{37}\bra\delta,\mu^2 S^Gs^Fs_Fs_G(\mu)\ket s_Gs^Gs^F(\delta)s^F(\delta)=\\
&=s_G(\mu)s^Fs_Fs_G(\mu) s_Gs^Gs^F(\delta)s^F(\delta)
\end{align*}
where for the last relation, in passing from the first line to the second, we used the coproduct formula $\cop^G(\mu)=\mu\oA
\mu T(\delta)$ which, in the present construction, can be easily proven by pairing it with arbitrary $f$, $f'\in F$. 
This finishes the construction of a skew monoidal monoid
\[
\A=\bra A,s_Gs^G,s^Fs_F,s_G(\mu),\eta,s^F(\delta),\eps\ket
\]
from the data of a source regular right $A$-bialgebroid $G$ and from a choice of adjunction data $s_G$, $\mu$, $\eta$ for
the left adjoint $s^G$. 

If $G$ is obtained from a SMM structure on $A$, let us say $\A'$, then $s^G$ has another right adjoint, the inclusion 
$s'_G: G\into A$ with counit $\mu'$ and unit $\eta'$. Then there is an invertible 2-cell $z:s_G\to s'_G$ in $\Mon$  
such that $\mu' s^G(z)=\mu$ and $z\eta=\eta'$. Then (\ref{eps-del-sF}) yields $\eps'=\eps z^{-1}$, $\delta'=s_F(z)\delta$,
$s'_F(a)=s_F(a)$ and ${s'}^F(f)=z s^F(f) z^{-1}$.
It follows that the monoid automorphism $\phi(a)=z a z^{-1}$ together with
\[
\phi_2=Q'(z^{-1})T'(z^{-1})\quad\text{and}\quad \phi_0=z
\]
is an isomorphism of skew monoidal monoids from $\A$ to $\A'$, that is to say the triple $\bra\phi,\phi_2,\phi_0\ket$ satisfies equations (\ref{SMM iso1}), (\ref{SMM iso2}), (\ref{SMM iso3}) and (\ref{SMM iso4}).

Going in the opposite direction suppose that we start from a source regular bialgebroid $G$, perform the above construction of a
SMM $\A$ and then apply Theorem \ref{thm: G} to construct a bialgebroid $G'$. This bialgebroid $G'$ is nothing but the submonoid of $A$ generated by $s_G(\mu)$ and $s_Gs^G(A)$. But this is nothing but the image of the map $s_G$. Therefore
restricting $s_G$ to its image (together with the identity $A\to A$) is a bialgebroid isomorphism $G\iso G'$.
\end{proof}

\section{Closed and Hopf skew monoidal monoids}

Let $\M$ be the category of all (small) right $A$-modules. Then the $A$-bialgebroid $G$ of a SMM $\A$ determines a closed skew 
monoidal structure on $\M$ with skew monoidal product
\begin{equation}
\label{smpc}
M\smpc N:=M\oS(N\oT G)
\end{equation}
where $\oS$ refers to tensoring over $A$ w.r.t the left $A$-action $a\cdot g= gS(a)$ on $G$ and $\oT$ w.r.t. the other left
$A$-action $a\cdot g=T(a)g$. That is to say, $M\smpc N$ is the right $A$-set the elements of which are equivalence classes
$[m,n,g]$ of elements $\bra m,n,g\ket\in M\x N\x G$ w.r.t. the equivalence relation generated by 
\[
\bra m\cdot b,n\cdot a,g\ket\ \sim\ \bra m, n,T(a)gS(b)\ket\,\quad m\in M,\ n\in N,\ g\in G,\ a,b\in A\,.
\]
The $A$-action is given by $[m,n,g]\cdot a:=[m,n,gT(a)]$.
The skew unit object $R$ is the right regular $A$-module and the coherence morphisms are
\begin{align*}
\gamma_{L,M,N}\ :\ L\smpc(M\smpc N)&\to (L\smpc M)\smpc N\\
[l,[m,n,g],h]&\mapsto[[l,m,h\oneT],n,gh\twoT]\\
\eta_M\ :\ M&\to R\smpc M\\
m&\mapsto[1_A,m,1_G]\\
\eps_M\ :\ M\smpc R&\to M\\
[m,a,g]&\mapsto m\cdot \pi(T(a)g)\,.
\end{align*}
The skew monoidal product $M\smpc N$ being a colimit, both endofunctors $M\smpc\under$ and $\under \smpc N$ on $\M=\Set_A$ have right adjoints therefore $\smpc$ is a closed skew monoidal structure.

Let $\M_1\subset\M$ be the full subcategory of rank 1 free $A$-modules. For each object $M\in\M_1$ choose a free generator $\xi_M\in M$,
so that 
\[
\nu_M:R\to M,\qquad a\mapsto \xi_M\cdot a
\]
is an isomorphism in $\M_1$. For two objects $M,N\in\M_1$ define
\[
\nu_{M,N}:M\smpc N\to G,\quad [m,n,g]\mapsto T(\nu_N^{-1}(n)) g S(\nu_M^{-1}(m))
\]
which is an isomorphism in $\M$ with inverse $\nu_{M,N}^{-1}(g)=[\xi_M,\xi_N,g]$. Since $G\in\M_1$ with $\xi_G=\mu$, the category $\M_1$
is a skew monoidal subcategory of $\M$. 

\begin{lem}\label{lem: M_1}
The functor $\nabla:\M_1\to\A$ which maps $M\rarr{f}N$ to $\nu_N^{-1}(f(\xi_M))$ is a skew monoidal equivalence.
\end{lem}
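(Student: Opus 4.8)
The plan is to show that $\nabla:\M_1\to\A$ is a skew monoidal equivalence by establishing three things in turn: that $\nabla$ is an equivalence of underlying categories, that it carries the skew monoidal structure of $\M_1$ to that of $\A$, and that the coherence data match. I would begin by recording what $\nabla$ actually is on objects and arrows. Every object $M\in\M_1$ is a rank $1$ free $A$-module, so under the chosen generator $\xi_M$ it is isomorphic to the unit object $R$ via $\nu_M$; hence $\nabla(M)$ is always the single object of the one-object category $\A$. On arrows, given $f:M\to N$, the element $\nabla(f)=\nu_N^{-1}(f(\xi_M))\in A$ is the unique scalar with $f(\xi_M)=\xi_N\cdot\nabla(f)$, and functoriality together with identities follows immediately from $A$-linearity of the $f$'s and the fact that $\nu_M$ is an isomorphism.

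Next I would verify that $\nabla$ is an equivalence. Faithfulness is clear because $\nabla(f)$ determines $f$ completely: $f$ is $A$-linear and fixed by its value on the free generator $\xi_M$. Fullness holds because for any $a\in A$ the assignment $\xi_M\mapsto\xi_N\cdot a$ extends uniquely to an $A$-linear map $M\to N$ with $\nabla$ equal to $a$. Essential surjectivity is trivial since $\A$ has a single object and $\nabla(R)$ realises it; in fact $\nabla$ is surjective on objects. So $\nabla$ is an equivalence of ordinary categories.

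The substantive part is the skew monoidal comparison. Here I would use the isomorphisms $\nu_{M,N}:M\smpc N\iso G$ together with the fact that $G\in\M_1$ with distinguished generator $\xi_G=\mu$, which makes $\M_1$ a skew monoidal subcategory of $\M$. The monoidal structure map $\nabla_2$ and the unit comparison $\nabla_0$ should be read off from these: on the unit side, $\nabla(R)$ is the one object and the skew unit of $\A$ is $\eps$, so $\nabla_0$ is built from the generator $\xi_R=1_A$ and the formula for $\eta_R$ and $\eps$; on the tensor side, one checks $\nabla(f\smpc f')=\nabla(f)\hotobj\nabla(f')$ or, more precisely, that $\nabla$ sends the skew product on $\M_1$ to the product $\smp$ on $A$, using $x\hotobj y=yQ(x)\delta$ and the explicit form of $\nu_{M,N}$. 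The key computation is to transport the three coherence morphisms $\gamma_{L,M,N}$, $\eta_M$, $\eps_M$ of $\M$ through the $\nu$'s and confirm they become exactly the SMM coherence elements $\gamma=\mu\delta$, $\eta$, $\eps$; this is where the explicit formulas $[l,[m,n,g],h]\mapsto[[l,m,h\oneT],n,gh\twoT]$ etc.\ must be evaluated on the distinguished generators and simplified using the bialgebroid structure of $G$ and relations such as \eqref{S eta} and \eqref{eps S}.

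I expect the main obstacle to be precisely this last transport of the coherence morphism $\gamma$. Unwinding $\gamma_{L,M,N}$ on generators requires expanding the comultiplication $h\oneT\otimes h\twoT$ via $\cop^G$ and then reidentifying the result through $\nu_{(L\smpc M),N}$ and $\nu_{L,(M\smpc N)}$; keeping track of which left $A$-action ($\oS$ versus $\oT$) is used at each equivalence-class identification is delicate, and the bookkeeping of source/target maps $s^G=T$, $t^G=S$ is where sign- and side-conventions can silently break. Once that single naturality-and-coherence computation is done, the remaining skew monoidal functor axioms \eqref{SMM iso2}--\eqref{SMM iso4} for $\bra\nabla,\nabla_2,\nabla_0\ket$ reduce to the bimonoid relations of Proposition \ref{pro: SMM mu-del axioms}, and invertibility of $\nabla_2$, $\nabla_0$ follows from invertibility of each $\nu_{M,N}$, giving that $\nabla$ is a skew monoidal \emph{equivalence} rather than merely a skew monoidal functor.
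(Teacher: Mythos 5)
Your proposal follows essentially the same route as the paper: the equivalence is immediate because all objects of $\M_1$ are isomorphic to $R$ with hom-sets identified with $A$ via the chosen generators, and the strong skew monoidal structure you propose to ``read off'' from the $\nu$'s is exactly the element the paper calls $\alpha_{M,N}$, determined by $\xi_{M\smpc N}\cdot\alpha_{M,N}=[\xi_M,\xi_N,\mu]$, with the choice $\xi_R=1_A$ giving strict normality (the paper likewise omits the coherence verification as a ``straightforward calculation''). One small slip: the relevant product on the target $\A$ is $a\smp b=Q(a)T(b)$, not the horizontal tensor $x\hotobj y=yQ(x)\delta$ of the module category $\A^T$, though you correct yourself mid-sentence and this does not affect the argument.
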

\begin{proof}
Since all objects of $\M_1$ are isomorphic, $\nabla$ is clearly an equivalence. The only question is strong skew monoidality of $\nabla$. Omitting the details of a straightforward calculation we remark that the isomorphism $\nabla(M)\smpc \nabla(N)\iso \nabla(M\smpc N)$ is given by the unique element $\alpha_{M,N}\in A$ such that
\[
\xi_{M\smpc N}\cdot\alpha_{M,N}=[\xi_M,\xi_N,\mu]
\]
and choosing $\xi_R=1_A$, hence $\nu_R=\id_A$, the $\nabla$ can be made strictly normal.
\end{proof}
\begin{cor}
Every SMM is equivalent, as skew monoidal categories, to a full skew monoidal subcategory of a closed skew monoidal category.
\end{cor}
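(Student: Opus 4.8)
The plan is to assemble the statement from the two tools just established: the closed skew monoidal category $\M = \Set_A$ with product $\smpc$ defined in (\ref{smpc}), and the skew monoidal equivalence $\nabla:\M_1 \to \A$ of Lemma \ref{lem: M_1}. Since $\M_1$ is a full skew monoidal subcategory of $\M$ (the closure of $\M_1$ under $\smpc$ was noted just before the lemma, using $G \in \M_1$ with $\xi_G = \mu$), the composite chain $\A \simeq \M_1 \hookrightarrow \M$ exhibits $\A$ as equivalent to a full skew monoidal subcategory of the closed skew monoidal category $\M$. The only point requiring care is the direction of the equivalence: Lemma \ref{lem: M_1} gives $\nabla:\M_1 \to \A$, whereas the Corollary asks to realize $\A$ inside $\M$, so I would invoke a quasi-inverse.

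First I would recall that a skew monoidal equivalence has a skew monoidal quasi-inverse. This is the one genuinely non-routine point: for ordinary monoidal equivalences it is classical that a strong monoidal functor which is an equivalence has a strong monoidal pseudo-inverse, and the same bookkeeping goes through for strong skew monoidal functors because strongness means the structure constraints $\phi_2,\phi_0$ are invertible, so they can be transported along the unit and counit of the equivalence. Concretely, since $\nabla$ is strongly (and, by the last sentence of Lemma \ref{lem: M_1}, strictly normally) skew monoidal and an equivalence, there is a strong skew monoidal functor $\nabla^{-1}:\A \to \M_1$ with the coherence isomorphism $\alpha_{M,N}$ of Lemma \ref{lem: M_1} inverted to supply $\nabla^{-1}$ its own structure constraint. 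I would state this as a lemma about strong skew monoidal equivalences and give a one-line justification by transport of structure, rather than re-deriving the constraints by hand.

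Next I would observe that the inclusion $\iota:\M_1 \hookrightarrow \M$ is a strict skew monoidal functor. This is immediate: $\M_1$ is defined as a full subcategory closed under $\smpc$ and containing the skew unit $R$ (the regular module, which is rank $1$ free with generator $\xi_R = 1_A$), and $\iota$ preserves $\smpc$, $\gamma$, $\eta$, $\eps$ on the nose since these are computed identically in $\M_1$ and in $\M$. Hence $\iota$ is in particular a strong skew monoidal functor which is fully faithful.

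Finally I would compose: $\iota \ci \nabla^{-1}:\A \to \M$ is a composite of strong skew monoidal functors, hence strong skew monoidal, and it is fully faithful because $\nabla^{-1}$ is an equivalence onto $\M_1$ and $\iota$ is the inclusion of a full subcategory. Its essential image is exactly $\M_1$, a full skew monoidal subcategory of $\M$, so $\A$ is equivalent as skew monoidal categories to a full skew monoidal subcategory of the closed skew monoidal category $\M$, which is the assertion. I expect the main obstacle to be purely presentational, namely pinning down the quasi-inverse's skew monoidal structure cleanly; the rest is formal.
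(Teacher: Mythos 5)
Your proposal is correct and follows exactly the route the paper intends: the Corollary is stated without proof precisely because it is the immediate combination of Lemma \ref{lem: M_1} (the skew monoidal equivalence $\nabla:\M_1\to\A$) with the observation, made just before that lemma, that $\M_1$ is a full skew monoidal subcategory of the closed skew monoidal category $\M=\Set_A$. Your extra care about reversing the equivalence via a strong skew monoidal quasi-inverse is a legitimate (and standard) point of bookkeeping that the paper simply leaves implicit.
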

\begin{rmk}
Although all objects of $\M_1$ are isomorphic to the skew monoidal unit, the components of $\gamma$, $\eta$ and $\eps$ may have non-invertible components within $\M_1$. In fact if any one of the components of $\eta$ or $\eps$ is invertible then all of them are and $\A$ is a trivial SMM; see Lemma \ref{lem: mimosa}.
\end{rmk}

Before discussing the closed structure of SMM-s we prove an embedding theorem relating SMM-s to SMC-s.
In the following Theorem \textit{skew monoidal embedding} means a fully faithful strong skew monoidal functor.
\begin{thm} \label{thm: emb}
Let $\bra\M,\smpc,R,\gamma,\eta,\eps\ket$ be a skew monoidal category (SMC). Then for the existence of a SMM $\A$ and a skew monoidal embedding
\[
E: A\to\M
\]
it is sufficient and necessary that $R\smpc R\cong R$.
\end{thm}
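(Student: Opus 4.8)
The plan is to read a SMM as exactly a skew monoidal category whose underlying category has a single object $*$; since a skew unit object must exist, $*$ is forced to be that unit and the unique product is $*\smp*=*$. \emph{Necessity} is then immediate: if $E\colon\A\to\M$ is a fully faithful strong skew monoidal functor out of such a one-object SMC, the unit constraint $E_0\colon R\iso E(*)$ gives $E(*)\cong R$, while invertibility of the multiplicative constraint $E_2$ gives $E(*)\smpc E(*)\iso E(*\smp*)=E(*)$. As $\smpc$ preserves isomorphisms, $R\smpc R\cong E(*)\smpc E(*)\cong E(*)\cong R$, as required.

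For \emph{sufficiency} I would fix an isomorphism $\theta\colon R\smpc R\iso R$ and set $A:=\End_\M(R)$, with $E\colon\A\to\M$ the functor sending $*$ to $R$ and acting as the identity on $\A(*,*)=\End_\M(R)=\M(R,R)$. This $E$ is fully faithful by construction, so the whole problem reduces to putting a SMM structure on $A$ for which $E$ becomes strong skew monoidal with constraints $E_2:=\theta$ and $E_0:=\id_R$. I would obtain the structure by transport along $\theta$: naturality of $E_2$ forces the skew product $a\smp b:=\theta\ci(a\smpc b)\ci\theta^{-1}$, and solving the three skew monoidal functor coherence axioms for the coherence elements yields
\[
\gamma:=\theta\ci(\theta\smpc 1_R)\ci\gamma^\M_{R,R,R}\ci(1_R\smpc\theta)^{-1}\ci\theta^{-1},\qquad \eta:=\theta\ci\eta^\M_R,\qquad \eps:=\eps^\M_R\ci\theta^{-1},
\]
all well-defined elements of $A$, where $\gamma^\M,\eta^\M,\eps^\M$ are the coherence morphisms of $\M$ evaluated at $R$.

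It then remains to check that $\bra A,\smp,\gamma,\eta,\eps\ket$ is a genuine SMM, and I would do this by faithfulness rather than by brute computation. Every SMM axiom --- the monoid laws (\ref{smp 1})--(\ref{smp 2}), the naturality relations (\ref{nat gam})--(\ref{nat eps}), and the five skew monoidality relations --- is an equation between parallel endomorphisms of $*$, hence an equation in $A$; since $E$ is faithful it suffices to verify each after applying $E$. Because $E_2,E_0$ were chosen exactly so that the skew monoidal functor diagrams commute, applying $E$ transports each SMM axiom, with all occurrences of $\theta^{\pm1}$ cancelling, into the corresponding coherence law of $\M$ at $R$ (naturality of $\gamma^\M$, the pentagon, and the unit triangles), all of which hold by hypothesis. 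This simultaneously shows that $\A$ is a SMM and that $E$ is a skew monoidal embedding.

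The main obstacle is precisely this transport-of-structure bookkeeping, and within it the pentagon is the delicate case: after substituting the formula for $\gamma$ one must slide the three nested constraints $1_R\smpc\theta$, $\theta\smpc 1_R$ and $\theta$ past the associators $\gamma^\M$, invoking the naturality of $\gamma^\M$ (and of $\smpc$ in each variable) to recognise the result as the image of the pentagon of $\M$. The unit cases are lighter but require the same care with the naturality of $\eta^\M$ and $\eps^\M$. Once this is verified, $E$ is the desired skew monoidal embedding, completing the sufficiency direction and hence the equivalence.
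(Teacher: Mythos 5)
Your proposal is correct and follows essentially the same route as the paper: necessity via the unit and multiplicative constraints of the embedding, and sufficiency by taking $A=\M(R,R)$ and transporting the structure along a chosen isomorphism $R\smpc R\iso R$, with exactly the same formulas for $\smp$, $\gamma$, $\eta$, $\eps$ (your $(1_R\smpc\theta)^{-1}$ equals the paper's $R\smpc\kappa^{-1}$). The paper leaves the axiom verification implicit, whereas you outline it via faithfulness of $E$; this is a reasonable way to organize the same check.
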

\begin{proof}
Let $E:\A\to\M$ be a SM embedding of a SMM $\A$. Let $R'$ be the image of the single object of $\A$. Then $R\cong R'$ and $R'\smpc R'\cong R'$ by strong skew monoidality of $E$. Hence $R\smpc R\cong R$.

Assuming that $\kappa:R\smpc R\iso R$ is an isomorphism we can define a skew monoidal structure on the endomorphism monoid $A=\M(R,R)$ by setting
\begin{align*}
a\smp b&:=\kappa\ci(a\smpc b)\ci\kappa^{-1},\qquad a,b\in A\\
\gamma&:=\kappa\ci(\kappa\smpc R)\ci\gamma_{R,R,R}\ci(R\smpc\kappa^{-1})\ci\kappa^{-1}\\
\eta&:=\kappa\ci\eta_R\\
\eps&:=\eps_R\ci\kappa^{-1}
\end{align*}
Therefore $\A=\bra A,\smp,\gamma,\eta,\eps\ket$ is a skew monoidal monoid and
\[
E:\A\to\M,\qquad E(a)=(R\rarr{a}R)
\]
is a skew monoidal embedding the strong skew monoidal structure of which is given by $\kappa$ and by the identity arrow $R\rarr{=}R$. 
\end{proof}

As a skew monoidal category $\A$ being (left or/and right) closed is equivalent, by Lemma \ref{lem: M_1}, to $\M_1$ having this property. But $\M_1$ is a full skew monoidal subcategory of the closed $\M$ therefore any type of closedness of $\M_1$ is the question of whether the corresponding internal hom functors map the unit object $R$ into $\M_1$. Since
\begin{align*}
M\smp\under&\cong \under \oT(M\oS G)\cong\under\oT G\cong\under\oA (\,_{T(A)}A_A)\\
\under\smp M&\cong\under\oS(M\oT G)\cong\under\oS G\cong \under\oA(\,_{Q(A)}A_A)
\end{align*}
for all object $M\in\M_1$, the right and left internal homs are 
\begin{align}
[M,N]^r&\cong N_{T(A)}\\
[M,N]^\ell&\cong N_{Q(A)}\,.
\end{align}
respectively, where, e.g. $N_{T(A)}$ denotes the set $N$ equipped with right $A$-action $\bra n,a\ket\mapsto n\cdot T(a)$. 
This leads to the following result.
\begin{pro}
For a SMM $\A$ the following conditions are equivalent:
\begin{enumerate}
\item $\A$ is right closed, i.e., the $T:A\to A$ has a right adjoint in the 2-category $\Mon$.
\item The right internal hom $[R,R]^r\cong A_{T(A)}$ belongs to $\M_1$, i.e., $A$ is rank 1 free as right $A$-module via $T$.
\item $F_A\equiv \,_{S(A)}F$ is rank 1 free.
\end{enumerate}
Also, the following conditions are equivalent:
\begin{enumerate}
\item $\A$ is left closed, i.e., the $Q:A\to A$ has a right adjoint in the 2-category $\Mon$.
\item The left internal hom $[R,R]^\ell\cong A_{Q(A)}$ belongs to $\M_1$, i.e., $A$ is rank 1 free as right $A$-module via $Q$.
\item $G_{TQ(A)}$ is a rank 1 free right $A$-module.
\end{enumerate}
\end{pro}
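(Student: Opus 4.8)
The plan is to reduce both equivalences to the already-established closed structure on $\M$ and then bridge to $F$ and $G$ by explicit module isomorphisms; the left-closed statement is handled dually to the right-closed one.

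For the right-closed chain I would first obtain (i)$\Leftrightarrow$(ii) from the preparatory discussion. By Lemma~\ref{lem: M_1} the functor $\nabla$ is a skew monoidal equivalence $\M_1\iso\A$, and since closedness is invariant under skew monoidal equivalences, $\A$ is right closed iff $\M_1$ is. Now $\M_1$ is a full skew monoidal subcategory of the closed category $\M$ whose right internal hom is $[M,N]^r\cong N_{T(A)}$; as every object of $\M_1$ is isomorphic to the unit $R$ and internal homs preserve isomorphisms, closedness of $\M_1$ is equivalent to $[R,R]^r\cong A_{T(A)}$ lying in $\M_1$, i.e.\ to $A$ being rank $1$ free as a right $A$-module via $T$. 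One can also see (i)$\Leftrightarrow$(ii) entirely inside $\Mon$: if $T\adj T^*$ with unit $c$ and counit $v$, then $a\mapsto vT(a)$ is a bijection with inverse $b\mapsto T^*(b)c$; conversely a free generator $\xi$ of $A_{T(A)}$ produces a right adjoint by setting $T^*(b)$ to be the unique preimage of $b\xi$ under $a\mapsto \xi T(a)$.

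For (ii)$\Leftrightarrow$(iii) I would exhibit a right $A$-module isomorphism $A_{T(A)}\iso F_A$. The candidate is $\beta:A\to F$, $\beta(a):=Q(a)\delta$, which maps $A$ bijectively onto $F=Q(A)\delta$ (injectivity follows after multiplying on the left by $\eps$ and using $\eps Q(a)=a\eps$ together with $\eps\delta=1$). Recalling that the right action on $F$ is $f\cdot a=S(a)f$, the linearity computation $\beta(a)\cdot b=S(b)Q(a)\delta=Q(a)S(b)\delta=Q(a)QT(b)\delta=Q(aT(b))\delta=\beta(a\cdot b)$ uses only that the images $T(A),Q(A),S(A)$ pairwise commute, the relation (\ref{S delta}), and that $Q$ is a monoid endomorphism. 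Hence $A_{T(A)}\cong F_A$, so the two freeness statements coincide.

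The left-closed equivalences are proved dually. Here $\under\smpc N\cong\under\oA(\,_{Q(A)}A_A)$ gives $[M,N]^\ell\cong N_{Q(A)}$, so (i)$\Leftrightarrow$(ii) follows exactly as above with $[R,R]^\ell\cong A_{Q(A)}$ in place of $A_{T(A)}$. For (ii)$\Leftrightarrow$(iii) I would use the bijection $\rho:A\iso G$, $\rho(a)=\mu T(a)$ coming from the free generator $\mu$ of $G_A$ (Theorem~\ref{thm: G}) and check that $\rho$ is a right $A$-module isomorphism $A_{Q(A)}\iso G_{TQ(A)}$: since $\rho(aQ(b))=\mu T(aQ(b))=\mu T(a)TQ(b)$, the $A_{Q(A)}$-action $a\cdot b=aQ(b)$ is carried onto the action via $TQ$ on $G$. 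The only real work lies in these two linearity verifications; matching the various right actions (via $T$, $Q$, $S$, $TQ$) across $\beta$ and $\rho$ is routine once the commutation relations among $T(A),Q(A),S(A)$ and the identities $\eps\delta=1$, (\ref{S delta}), (\ref{eps S}) are in hand, so I expect no genuine obstacle beyond this bookkeeping.
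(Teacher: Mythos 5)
Your proposal is correct and follows essentially the same route as the paper, which offers no separate proof but derives (i)$\Leftrightarrow$(ii) from Lemma~\ref{lem: M_1} and the computation $[M,N]^r\cong N_{T(A)}$, $[M,N]^\ell\cong N_{Q(A)}$ in the paragraph preceding the statement. Your explicit right-$A$-module isomorphisms $\beta:A_{T(A)}\iso F_A$, $a\mapsto Q(a)\delta$ (using $S(b)\delta=QT(b)\delta$ and the commutation of $Q(A)$ with $S(A)$) and $\rho:A_{Q(A)}\iso G_{TQ(A)}$, $a\mapsto\mu T(a)$ correctly supply the (ii)$\Leftrightarrow$(iii) bridges that the paper leaves implicit.
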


Comparing left and right closedness, especially property (iii), we see that closedness is utterly asymmetic.
In a closed SMM the bialgebroid $F$ is both source-regular and target-regular but $G$ is not target-regular.
In order to get a symmetric notion let us say that $\A$ is `biclosed' if it is right closed and `left coclosed', i.e., require that 
$T$ has a right adjoint and $Q$ has a left adjoint. Then a little inspection shows that $\A$ is `biclosed' precisely when
both $F$ and $G$ are target-regular.

So far, in this Section, we have been working within the skew monoidal category $\M=\Set_A$ associated to the bialgebroid $G$.
There is another construction associated to a bialgebroid, the bimonad or opmonoidal monad acting on the bimodule category $_A\Set_A$. It is this latter language on which Hopf algebroids \cite{Sch} and Hopf monads \cite{BLV} can be defined. 

Identifying $_A\Set_A$ with $\Set_E$, where $E:=A^\op\x A$, the opmonoidal monad is given by tensoring with the $E$-monoid $G$,
\begin{align*}
\hat T X\ &:=\ X\oE G\\
\hat\mu_X&: (x\oE g)\oE h\ \mapsto\ x\oE gh\\
\hat\eta_X&: x\ \mapsto\ x\oE 1_G
\end{align*}
and the opmonoidal structure is 
\begin{alignat*}{2}
&\hat T_{X,Y}\ :&\quad (X\oA Y)\oE G&\to(X\oE G)\oA(Y\oE G)\\
&&\quad (x\ot y)\ot g&\mapsto (x\ot g\oneT)\ot(y\ot g\twoT)\\
&\hat T_0\ :&\quad A\oE G&\to A\\
&&\quad a\ot g&\mapsto \pi(T(a)g)\,.
\end{alignat*}
The connection between the opmonoidal monad and the canonical monad $R\smpc\under$ on $\Set_A$ is the isomorphism of their Eilenberg-Moore categories over the forgetful functor $_A\Set_A\to\Set_A$. But $R\smpc\under$ is lacking any (op)monoidal structure and such properties as a bialgebroid being a Hopf algebroid can only be formulated in terms of $\hat T$.  
The left and right fusion morphisms associated to $\hat T$ are
\begin{align}
\label{Hleft}
H^\ell_{X,Y}&=(\hat T X\oA \hat\mu_Y)\ci\hat T_{X,\hat TY}\ :\ \hat T(X\oA \hat TY)\to\hat TX\oA\hat TY\\
&:\ (x\oA(y\oE g))\oE h\ \mapsto\ (x\oE h\oneT)\oA(y\oE gh\twoT)\\
\label{Hright}
H^r_{X,Y}&=(\hat\mu_X\oA\hat T Y)\ci\hat T_{\hat T X,Y}\ :\ \hat T(\hat TX\oA Y)\to\hat TX\oA \hat TY\\
&:\ ((x\oE g)\oA y)\oE h\ \mapsto\ (x\oE gh\oneT)\oA(y\oE h\twoT)
\end{align}
and we know from  \cite[Theorem 3.6]{BLV} that $\hat T$ is left (right) Hopf, i.e., $H^\ell$ (resp. $H^r$) is invertible, precisely when
$(\,_A\Set_A)^{\hat T}$ is left (right) closed and the forgetful functor to $_A\Set_A$ preserves the left (right) internal hom. 
If we try to translate this Theorem to the skew monoidal language we again run into the problem that left and right closed structures behave differently.
\begin{lem}
For a skew monoidal monoid $\A=\bra A,T,Q,\gamma,\eta,\eps\ket$ the following conditions are equivalent:
\begin{enumerate}
\item $G$, the right $A$-bialgebroid associated to $\A$, is left Hopf, i.e., the opmonoidal monad $\hat T=\,\under\oE G$ is a left Hopf monad, 
\item the canonical map
\[
\can^G:\ G\am{T(A)}G\to G\oA G,\quad g\am{T(A)}h\mapsto h\oneT\oA gh\twoT
\]
is a bijection,
\item $\gamma$ is invertible.
\item the canonical map
\[
\can_F:\ F\am{Q(A)}F\to F\oA F,\quad f\am{T(A)}f'\mapsto f\oneB f'\oA f\twoB
\]
is a bijection,
\item $F$, the left $A$-bialgebroid associated to $\A$, is right Hopf, i.e., the opmonoidal monad \newline
{$F\am{A\x A^\op}\under$} is a right Hopf monad.
\end{enumerate}
\end{lem}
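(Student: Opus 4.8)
The plan is to prove the five conditions equivalent by establishing the cycle of implications
\[
(iii)\Rightarrow(ii)\Leftrightarrow(i),\qquad (iii)\Rightarrow(iv)\Leftrightarrow(v),
\]
together with $(ii)\Rightarrow(iii)$ and $(iv)\Rightarrow(iii)$, using the dual symmetry between $G$ and $F$ to cut the work roughly in half. The equivalences $(i)\Leftrightarrow(ii)$ and $(iv)\Leftrightarrow(v)$ are the most formal: for a right bialgebroid the left-Hopf condition of \cite[Theorem 3.6]{BLV} says exactly that the left fusion morphism $H^\ell$ of the opmonoidal monad $\hat T=\under\oE G$ is invertible, and unwinding the definition (\ref{Hleft}) of $H^\ell$ on the generating data shows that its invertibility is equivalent to the bijectivity of the canonical map $\can^G$. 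So I would first record this translation, noting that since $G_A$ is free of rank $1$ on $\mu$ (Theorem \ref{thm: G}), both $G\am{T(A)}G$ and $G\oA G$ are concretely describable and $\can^G$ becomes an explicit map between free $A$-modules.

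The heart of the matter is the equivalence $(ii)\Leftrightarrow(iii)$. \textbf{For $(iii)\Rightarrow(ii)$}, I would exhibit an explicit inverse of $\can^G$ built from $\gamma^{-1}$. Recall $\gamma=\mu\delta$, the comultiplication is $\cop^G(\mu)=\mu\oA\mu T(\delta)$, and $G=\mu T(A)$; the map $\can^G$ sends $g\am{T(A)}h\mapsto h\oneT\oA gh\twoT$, which on the free generator unwinds to an expression in $\delta$ and multiplication in $A$. Since invertibility of $\gamma$ means both $\mu$ and $\delta$ become ``invertible up to'' the monad/comonad structure, the candidate inverse should be $g\oA h\mapsto g\,\gamma^{-1}(\cdots)\am{T(A)}h$ with the correction term dictated by the coproduct formula; verifying the two composites are identities is then a direct calculation using (\ref{SMM12})--(\ref{SMM15}) and the intertwiner relations. \textbf{The hard direction, and the main obstacle, is $(ii)\Rightarrow(iii)$}: from mere bijectivity of $\can^G$ I must manufacture a two-sided inverse for $\gamma$ inside $A$. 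The strategy is to feed distinguished elements into $(\can^G)^{-1}$: applying it to a carefully chosen element of $G\oA G$ (the natural candidate being the image of $1_G\oA 1_G$ or of $\mu\oA\mu$) produces an element of $G\am{T(A)}G$, and projecting through the counit $\epsilon^G=\pi$ and the freeness isomorphism $G\cong A$ should yield an explicit element of $A$ that one then checks is a left and right inverse to $\gamma$ using (\ref{smm-g 7}).

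\textbf{Finally}, the link $(iii)\Rightarrow(iv)$ and $(iv)\Rightarrow(iii)$ for $F$ is handled by the same argument transported through the duality of Section \ref{sec: comod}: $F$ and $G$ are a left dual pair under the pairing $\bra\under,\under\ket$, with source and target maps swapped in the prescribed way, so the right-Hopf condition for the opmonoidal monad $F\am{A\x A^\op}\under$ and the bijectivity of $\can_F$ translate into statements about $F$ that are literal duals of the $G$-statements already proved. Since $\gamma$ is a single element of the one common ambient monoid $A$ and both $\can^G$ and $\can_F$ ultimately encode its invertibility, once $(ii)\Leftrightarrow(iii)$ is in hand the $F$-side costs only a dualization remark rather than a fresh computation. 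I expect the only genuinely delicate point to be keeping the two distinct left $A$-actions on $G$ (via $S$ versus via $T$) straight when identifying the domains $G\am{T(A)}G$ and $F\am{Q(A)}F$, since it is precisely the mismatch between these actions that the invertibility of $\gamma$ resolves.
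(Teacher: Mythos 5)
Your overall architecture (formal translation for (i)$\Leftrightarrow$(ii) and (iv)$\Leftrightarrow$(v), a direct argument for (ii)$\Leftrightarrow$(iii), dualization for the $F$-side) matches the paper's, but you handle the central equivalence quite differently, and less efficiently. The paper's proof rests on one observation that your plan never makes: since $G_A$ is free of rank $1$ on $\mu$, \emph{both} $G\am{T(A)}G$ and $G\oA G$ are isomorphic to $G$ itself, via
$\sigma(g_1\am{T(A)}g_2)=T(g_1\eta)g_2$ and $\tau(g_1\oA g_2)=g_2S(g_1\eta)$, and under these identifications $\can^G$ becomes the single map $c^G(g)=\mu T(\mu\delta g\eta)$, i.e.\ (after the further bijection $K:A\iso G$, $a\mapsto\mu T(a)$) left multiplication by $\gamma=\mu\delta$ on $A$. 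Bijectivity of left multiplication by $\gamma$ is equivalent to invertibility of $\gamma$ in the monoid $A$, so both directions of (ii)$\Leftrightarrow$(iii) fall out at once. You instead treat (iii)$\Rightarrow$(ii) and (ii)$\Rightarrow$(iii) as separate constructions, and you flag the latter as ``the hard direction''; with the conjugation trick there is no hard direction.

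This difference is not merely aesthetic: your sketch of (ii)$\Rightarrow$(iii) has a soft spot. Feeding a distinguished element into $(\can^G)^{-1}$ and projecting can only produce a candidate $b$ with $\gamma b=1$ (that uses surjectivity); to promote $b$ to a two-sided inverse you must use \emph{injectivity} of $\can^G$, transported to injectivity of left multiplication by $\gamma$, to conclude from $\gamma(b\gamma)=\gamma\cdot 1$ that $b\gamma=1$. Your text instead appeals to the pentagon relation (\ref{smm-g 7}), which does not do this job. The needed injectivity is exactly what the $\sigma$/$\tau$ conjugation hands you for free, so I would urge you to restructure the argument around that identification rather than around explicit inverse formulas. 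The remaining pieces of your plan --- the translation of the Hopf condition through (\ref{Hleft}) and the dualization to $F$ --- agree with the paper and are fine as stated.
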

\begin{proof}
(i)$\Leftrightarrow$(ii) is obvious from formula (\ref{Hleft}). 

(ii)$\Leftrightarrow$(iii) Composing the canonical map with the isomorphisms
\begin{alignat*}{2}
\tau: G\oA G&\iso G,&\qquad  g_1\oA g_2&\mapsto g_2 S(g_1\eta)\\
\sigma:G\am{T(A)}G&\iso G,&\qquad g_1\am{T(A)}g_2&\mapsto T(g_1\eta)g_2
\end{alignat*}
we obtain a map
\[
c^G:G\to G,\qquad g\mapsto \mu T(\mu\delta g\eta)
\]
such that $\tau\ci \can^G=c^G\ci\sigma$.
Using the bijection $K:A\iso G$, $a\mapsto \mu T(a)$, we can write $c^G(g)=K(\gamma K^{-1}(g))$ from which the statement follows.

By dualizing the above arguments one obtains (iii)$\Leftrightarrow$(iv) and (iv)$\Leftrightarrow$(v).
\end{proof}

SMM-s with the above five properties could be called one-sided Hopf SMM although we cannot say whether left or right due to the rival conditions (i) and (v). Notice also that these one-sided Hopf properties do not require the skew monoidal structure of $\A$ to
be closed. 

The other Hopf property, i.e., invertibility of $H^r$ for $G$ or invertibility of $H^\ell$ for $F$, is equivalent to the maps
\begin{alignat*}{2}
{\can'}^G: G\am{S(A)}G&\to G\oA G,&\qquad g\am{S(A)}h&\mapsto gh\oneT\oA h\twoT\\
\can'_F: F\am{S(A)}F&\to F\oA F,&\qquad f\am{S(A)}f'&\mapsto f\oneB\oA f\twoB f'
\end{alignat*}
being bijective. Unfortunately these conditions are not equivalent to invertibility of some simple combination of the skew monoidal structure maps. However, if we assume the $\A$ is left coclosed, or, just for symmetry, that it is biclosed, then there is 
a simple characterization in terms of a \emph{left} skew monoidal structure. (So far, in this paper we used `skew monoidal' for a right skew monoidal structure.) If $Q$ has a left adjoint, let's say $P$ with unit $i:\id_A\to QP$ and counit $e:PQ\to\id_A$,
then a left skew monoidal structure on the monoid $A$ can be defined by
\begin{align*}
a\smp' b&:=eP(S(b)ia)\,,\qquad a,b\in A\\
\gamma'&:=eP(\mu T(i)e P(\delta i))\\
\eta'&:=eP(\eta)\\
\eps'&:=\eps i\,.
\end{align*}
This structure can be obtained via a contravariant embedding (similarly to Theorem \ref{thm: emb}) into the right skew monoidal 
category $_A\Set$ induced by the right $A^\op$-bialgebroid $G^\coop$.
\begin{lem}
For a biclosed skew monoidal monoid $\A=\bra A,T,Q,\gamma,\eta,\eps\ket$ the following conditions are equivalent:
\begin{enumerate}
\item $G$, the right $A$-bialgebroid associated to $\A$, is right Hopf.
\item ${\can'}^G$ is a bijection.
\item $\gamma'$ is invertible.
\end{enumerate}
\end{lem}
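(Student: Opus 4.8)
The plan is to run the same two-step scheme as in the preceding Lemma, but now with the right fusion morphism $H^r$ in place of $H^\ell$, and to reduce the decisive equivalence to that Lemma by passing to the coopposite bialgebroid $G^\coop$. Biclosedness enters twice: it supplies the left adjoint $P$ of $Q$ (with unit $i:\id_A\to QP$ and counit $e:PQ\to\id_A$) that is needed for $\gamma'$ to be defined, and it makes $G$ target-regular, equivalently $G^\coop$ a source-regular right $A^\op$-bialgebroid, so that the preceding Lemma is applicable to $G^\coop$.

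The equivalence (i)$\Leftrightarrow$(ii) is read off from the explicit formula (\ref{Hright}) exactly as (i)$\Leftrightarrow$(ii) was read off from (\ref{Hleft}) before: by \cite[Theorem 3.6]{BLV} the monad $\hat T$ is right Hopf iff $H^r$ is invertible, and (\ref{Hright}) shows that the family $H^r_{X,Y}$ consists of bijections for all $X,Y$ precisely when the single map ${\can'}^G$ is bijective. This is already the content of the remark preceding the statement, so I would simply record it.

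For (i)$\Leftrightarrow$(iii) I would use coopposition. First, $G$ is right Hopf iff $G^\coop$ is left Hopf: comparing (\ref{Hright}) for $G$ with the analogue of (\ref{Hleft}) for $G^\coop$, the reversal of the coproduct and the interchange of source and target turn the Galois map ${\can'}^G$ of $G$ into the left canonical map $\can^{G^\coop}$ of $G^\coop$, so the two invertibility conditions coincide. Next, by the classification theorem the source-regular bialgebroid $G^\coop$ is the one associated to a skew monoidal monoid, and by the contravariant embedding into the right skew monoidal category $\,_A\Set$ indicated above this SMM is (the opposite of) the left skew monoidal structure $\bra A,\smp',\gamma',\eta',\eps'\ket$ built from $P,i,e$; in particular its pentagon coherence element is $\gamma'$. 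Applying the preceding Lemma to $G^\coop$ then gives: $G^\coop$ is left Hopf iff $\gamma'$ is invertible. Chaining these equivalences yields (i)$\Leftrightarrow$(iii), and together with (i)$\Leftrightarrow$(ii) all three are equivalent.

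I expect the only real work to be the two identifications $\can^{G^\coop}={\can'}^G$ and $\gamma^{\coop}=\gamma'$. For the former one must check that coopposition carries the $S(A)$-balanced tensor and the bimodule structures of ${\can'}^G$ to the source-balanced tensor and structures of the left canonical map of $G^\coop$, so that the swap of the two Sweedler legs reproduces $g\am{S(A)}h\mapsto gh\oneT\oA h\twoT$. For the latter one must unwind the definition $\gamma'=eP(\mu T(i)eP(\delta i))$ and match it, through the contravariant embedding, with the coherence element produced by the classification theorem from $G^\coop$; this is where the adjunction data $P,i,e$ are genuinely used. Should one prefer to avoid coopposition, the alternative is the direct route of the preceding Lemma: compose ${\can'}^G$ with isomorphisms $\sigma':G\am{S(A)}G\iso G$ and $\tau':G\oA G\iso G$ to obtain a self-map of $G$ of the form $g\mapsto K(\gamma' K^{-1}(g))$ with $K:a\mapsto\mu T(a)$, the construction of $\sigma'$ again being the point where $P,i,e$ appear. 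In either presentation the equivalence is forced once the balanced-tensor isomorphisms are correctly set up.
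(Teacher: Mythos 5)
Your strategy is essentially the one the paper intends: the paper in fact states this lemma without a written proof, but the paragraph preceding it already records (i)$\Leftrightarrow$(ii) via $H^r$ and ${\can'}^G$, and explicitly notes that the left skew monoidal structure $\bra A,\smp',\gamma',\eta',\eps'\ket$ arises by a contravariant embedding from the right $A^\op$-bialgebroid $G^\coop$ --- which is exactly your reduction of (i)$\Leftrightarrow$(iii) to the preceding lemma applied to $G^\coop$. You also correctly locate where biclosedness enters (target-regularity of $G$, i.e.\ source-regularity of $G^\coop$, and the adjunction $P\adjoint Q$ needed both to define $\gamma'$ and to build the $S(A)$-balanced isomorphism $\sigma'$ in your alternative direct route), so either of your two presentations would complete the argument.
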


Concluding the paper we mention that the existence of non-trivial skew monoidal monoids is an open problem. It seems that the SMM structure is very sensitive to the addition of new axioms and can easily collapse into something trivial. 
Let us make this precise.

\begin{defi}
A skew monoidal monoid $\bra A,T,Q,\mu,\eta,\delta,\eps\ket$ is called trivial if
$A$ is commutative, $T=Q=\id_A$, $\eps$ is invertible and $\mu=\eps$, $\delta=\eta=\eps^{-1}$.
\end{defi}

\begin{lem}\label{lem: mimosa}
For a skew monoidal monoid $\A$ any one of the following conditions alone implies that $\A$ is a trivial skew monoidal monoid:
\begin{enumerate}
\item $A$ is finite.
\item $A$ is a cancellative monoid.
\item $A$ is commutative.
\item $T$ is an automorphism.
\item $Q$ is an automorphism.
\item $\eta$ is invertible.
\item $\mu$ is invertible.
\item $\delta$ is invertible.
\item $\eps$ is invertible.
\item $\mu\hotobj\mu\cong\eps$ as objects of $\A^T$.
\end{enumerate}
\end{lem}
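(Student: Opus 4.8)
The plan is to prove that each of the ten hypotheses forces $\eps$ to be invertible (condition (ix)), and then to show that (ix) alone already yields a trivial SMM; so the bulk of the work is organizing a web of short implications all pointing at (ix).

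First I would dispatch the core implication (ix)$\Rightarrow$triviality. Assuming $\eps$ invertible, (\ref{SMM13}) gives $\delta=\eps^{-1}$, (\ref{SMM18}) gives $\eta=\eps^{-1}=\delta$, and rewriting (\ref{SMM10}) as $\mu\eps^{-1}=1$ gives $\mu=\eps$; hence $\mu=\eps$ and $\eta=\delta=\eps^{-1}$. Now (\ref{SMM4}) reads $Q(a)=\eps^{-1}a\eps$ and (\ref{SMM2}) reads $T(a)=\eta a\eta^{-1}=\eps^{-1}a\eps$, so $T=Q$ is conjugation by $\eps^{-1}$. Feeding this into the commutation axiom (\ref{SMM6}) collapses it to $ab=ba$, i.e.\ $A$ is commutative, whereupon the conjugation is trivial and $T=Q=\id$. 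This is precisely the definition of a trivial SMM.

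Next I would reduce the remaining eight conditions to (ix) by one-line manipulations of the axioms. The invertibility hypotheses (vi)--(viii) each give (ix) directly: $\eta=\mu^{-1}$ by (\ref{SMM10}) and then $\eps=\eta^{-1}$ by (\ref{SMM18}); or $\eps=\eta^{-1}$ by (\ref{SMM18}); or $\eps=\delta^{-1}$ by (\ref{SMM13}). For the automorphism hypotheses I would use surjectivity: if $T$ is onto, the intertwiner axiom (\ref{SMM1}) $\mu T^2(a)=T(a)\mu$ becomes $\mu T(b)=b\mu$ for all $b$, and $b=\eta$ with (\ref{SMM11}) yields $\eta\mu=1$, which with (\ref{SMM10}) makes $\mu$ invertible; dually, $Q$ onto turns (\ref{SMM3}) into $\delta c=Q(c)\delta$, and $c=\eps$ with (\ref{SMM14}) gives $\delta\eps=1$, which with (\ref{SMM13}) makes $\eps$ invertible. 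Finally the three ``size/shape'' hypotheses feed in through injectivity of the endomorphisms (recall $\pi(a)=\eps a\eta$ is a common left inverse of $T,Q,S$): if $A$ is finite the injective $T$ is bijective, giving (iv); if $A$ is cancellative, right-cancelling $\eta$ in $(\eta\mu)\eta=\eta$ gives $\eta\mu=1$, giving (vii); and if $A$ is commutative then $\eps\eta=1$ already makes $\eps$ invertible, i.e.\ (ix).

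The genuinely delicate case is (x), and I expect it to be the main obstacle. Here I would exploit the faithful strong monoidal functor $\A^T(\mu,\under)\colon\A^T\to{}_A\Set_A$ of Proposition \ref{pro: forg fun of modules}, which sends $\mu$ to $G$ and the unit object $\eps$ to the regular bimodule ${}_AA_A$. An isomorphism $\mu\hotobj\mu\cong\eps$ in $\A^T$ is then transported to a bimodule isomorphism $G\oA G\cong{}_AA_A$. Since $G_A$ is free of rank one on $\mu$, the free generator $1$ of the regular bimodule pulls back to an element of $G\oA G$ whose left and right $A$-actions coincide; unwinding these actions (left action by $S$, right action by $T$) through the relation $\mu S(a)=\mu TQ(a)$ of (\ref{mu S}) and the injectivity of $a\mapsto\mu T(a)$ forces $Q(a)=a$ for all $a$, i.e.\ $Q=\id$, which is a special case of (v) and hence of (ix). The hard part is exactly this translation: converting the abstract object-isomorphism into the concrete identity $\mu T(a)=\mu TQ(a)$ demands careful bookkeeping of the bimodule structures on $G\oA G$ and of the strong-monoidal comparison isomorphism, in contrast to the purely axiomatic manipulations that settle the other nine conditions.
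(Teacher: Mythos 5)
The paper states Lemma \ref{lem: mimosa} without proof, so there is no argument of the author's to compare yours against; I can only assess the proposal on its own terms. Your architecture --- funnel all ten conditions into invertibility of $\eps$ and check that (ix) alone forces the trivial structure --- is sound, and the nine ``easy'' reductions all check out against the axioms of Proposition \ref{pro: SMM mu-del axioms}: the core step (ix)$\Rightarrow$trivial is correct ($\delta=\eta=\eps^{-1}$ and $\mu=\eps$ from (\ref{SMM13}), (\ref{SMM18}), (\ref{SMM10}); then the intertwiner axioms $a\eps=\eps Q(a)$ and (\ref{SMM2}) make $T=Q$ equal to conjugation by $\eps^{-1}$, and (\ref{SMM6}) collapses to commutativity), and the chains (i)$\Rightarrow$(iv)$\Rightarrow$(vii)$\Rightarrow$(vi)$\Rightarrow$(ix), (ii)$\Rightarrow$(vii), (iii)$\Rightarrow$(ix), (v)$\Rightarrow$(viii)$\Rightarrow$(ix) are one-line consequences of (\ref{SMM1}), (\ref{SMM10}), (\ref{SMM11}), (\ref{SMM13}), (\ref{SMM14}), (\ref{SMM18}) together with the injectivity of $T$ and $Q$.

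The one incorrect claim is the final identity in case (x). Transporting an isomorphism $t:\mu\hotobj\mu\iso\eps$ through $\A^T(\mu,\under)$ does give a bimodule isomorphism $G\oA G\cong{}_AA_A$, but the preimage of $1$ is $w=\mu\oA\mu T(c)$ with $c=t^{-1}$ (the composite of the inverse comparison maps sends $a$ to $\mu\oA\mu T(t^{-1}a)$, using $T(a)\eta=\eta a$ and $\eps\eta=1$), not $\mu\oA\mu$. Equating the two $A$-actions on $w$, using $\mu S(a)=\mu TQ(a)$ from (\ref{mu S}), the commutation of $S(A)$ with $T(A)$, and the injectivity of $a\mapsto\mu T(a)$, yields $Q^{2}(a)\,c=c\,a$ for all $a$ --- not $Q(a)=a$. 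This is still enough to close the argument: since $c$ is invertible, $Q^{2}(a)=cac^{-1}$, so $Q^{2}$ is surjective, hence $Q$ is surjective, and being injective it is an automorphism, which is case (v). So your route for (x) is correct in outline, but the step ``forces $Q=\id$'' should read ``forces $Q^{2}$ to be an inner automorphism, hence $Q$ to be an automorphism.''
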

Notice that invertibility of $\gamma$ does not occur among these fatal conditions, thereby granting Hopf skew monoidal monoids the chance of being non-trivial.

\end{document}